\theoremstyle{plain}
\newtheorem{theorem}{Theorem}[section]
\newtheorem{corollary}[theorem]{Corollary}
\newtheorem{lemma}[theorem]{Lemma}
\newtheorem{prop}[theorem]{Proposition}
\theoremstyle{definition}
\newtheorem{remark}[theorem]{Remark}
\newcommand{\real}{\mathds{R}}
\newcommand{\Pp}{\mathds{P}}
\newcommand{\Ee}{\mathds{E}}
\newcommand{\I}{\mathds{1}}
\newcommand{\Dcal}{\mathcal{D}}
\newcommand{\Fcal}{\mathcal{F}}
\newcommand{\Lcal}{\mathcal{L}}
\newcommand{\sgn}{\operatorname{sgn}}
\begin{document}

\title[Asymptotics of time changed Wiener processes]{Asymptotic behaviour and functional limit theorems for a time changed Wiener process}

\author[Y.\ Kondratiev]{Yuri Kondratiev}
\address{Fakult\"at Mathematik, Bielefeld Universit\"at,
33615 Bielefeld, Germany}
\email{kondrat@math.uni-bielefeld.de}

\author[Y.\ Mishura]{Yuliya Mishura}
\address{Department of Probability, Statistics and Actuarial Mathematics,\\ Taras Shevchenko National University of Kyiv, 01601  Kyiv, Ukraine} \email{myus@univ.kiev.ua}

\author[R.L.\ Schilling]{Ren\'e L.\ Schilling}
\address{Institut f\"{u}r Mathematische Stochastik, Fakult\"{a}t Mathematik, TU Dresden, 01062 Dresden, Germany}
\email{rene.schilling@tu-dresden.de}

\begin{abstract}
   We study the asymptotic behaviour of a properly normalized time changed Wiener processes. The time change reflects the fact that we consider the Laplace operator (which generates a Wiener process) multiplied by a possibly degenerate state-space dependent intensity $\lambda(x)$. Applying a functional limit theorem for the superposition of stochastic processes, we prove functional limit theorems for the normalized time changed Wiener process. The normalization depends on the asymptotic behaviour of the intensity function $\lambda$. One of the possible limits is a skew Brownian motion.
\end{abstract}

\keywords{Time-changed Wiener process; diffusion process;  functional limit theorem; skew Brownian motion}
\subjclass[2010]{60J65; 60J60; 60J55; 60F05}

\maketitle

\section{Introduction}\label{intro}
In this note we study the asymptotic behaviour of normalized, time changed Wiener processes.  The time changes are (generalized) inverses of additive functionals of the Wiener process. Our motivation for this kind of problem comes from the study of solutions to parabolic Cauchy problems of the following type
\begin{gather}\begin{alignedat}{3}\label{cauchy}
    \frac{\partial}{\partial t} u(t,x) &= \lambda(x)\Delta u(t,x), &\quad& t\ge 0,\; x\in \real^d,\\
                                u(0,x) &= f(x), &\quad& t=0,\; x\in \real^d,
\end{alignedat}\end{gather}
where the coefficient $\lambda$ can be irregular and even degenerate. In many situations, it is assumed that the diffusion coefficient is uniformly elliptic -- here: $\lambda(x)$ is bounded away from zero; degenerate problems are often treated with the method of vanishing viscosity, see e.g.\ \cite{ole-rad} (\S III.2) and \cite{bog-et-al} (\S 6.7(ii)) and the literature mentioned there, which can be quite cumbersome.

An alternative method to study such problem is to use probabilistic approach. Assume that we can construct a Markov process $X$, starting from every point $x\in\real^d$, and with infinitesimal generator $\lambda(x)\Delta$. In this situation, we have the following stochastic representation of the parabolic Cauchy problem \eqref{cauchy}
\begin{gather*}
    u(t,x)= \Ee^x [f(X_t)].
\end{gather*}
Yet, a direct construction of this process may be difficult (if possible at all) if one wants to use stochastic differential equations (SDEs) if the diffusion coefficient $\sqrt{\lambda(x)}$ is irregular and degenerate. In particular, in the study of diffusions in random media
random coefficients $\lambda =\lambda(\omega,x)$ appear, which have an additive form with respect to the points in the medium. This structure corresponds to the energy of diffusing particle in the medium.   In general, such coefficients will   not be uniformly positive.

A possible way out is offered by the theory of Dirichlet forms, e.g.\ \cite{Chen}, but the resulting process may only be defined up to an exceptional set. We will use a different approach, namely additive functionals and random time changes. Recall the following statement which is proved differently and for different purposes in \cite{BSW} and \cite{kurtz}.
\begin{prop}\label{prop1}
    Let $\left\{X_t,\, t\geq 0\right\}$ be a $d$-dimensional Feller process with natural filtration $\Fcal^X$, $\Fcal_t^X = \sigma(X_s, s\leq t)$, and generator $(A, \Dcal(A))$. Assume that the function $\lambda\in C_b(\real^d)$ is real-valued and strictly positive. Denote by
    \begin{gather*}
        S_X(t,\omega)=\int_0^t\frac{ds}{\lambda(X_s(\omega))}
    \end{gather*}
    and by $\tau_t(\omega) = \inf\{u>0: S_X(u,\omega)>t\}$ $(\inf\emptyset:=+\infty)$ the generalized right-continuous inverse. Then the time changed process
    $
        \left\{X_{\tau_t},\, t\geq 0\right\}
    $
    with filtration
    $
        \left\{\Fcal_{\tau_t}^X,\,t\geq 0\right\}
    $
    is again a Feller process and its generator is the closure of $(\lambda(\cdot)A, \Dcal(A))$.
\end{prop}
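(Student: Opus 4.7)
\medskip
\noindent\textbf{Proof plan.}
The strategy is to treat $S_X$ as a continuous additive functional (CAF) of $X$ and then invoke the general theory of time changes by the inverse of a CAF. First I would verify that $S_X$ is well defined and has the right pathwise properties: since $\lambda\in C_b(\real^d)$ is strictly positive (hence, together with the required non-degeneracy, bounded away from $0$), the integrand $1/\lambda(X_s)$ is bounded and bounded below by $1/\|\lambda\|_\infty$. Consequently, $t\mapsto S_X(t)$ is continuous, strictly increasing, and tends to $+\infty$ as $t\to\infty$, so the inverse $\tau_t$ is finite, continuous and strictly increasing. By a standard time-change argument for CAFs (the additivity $S_X(t+s)=S_X(t)+S_X(s)\circ\theta_t$ combined with the strong Markov property of $X$), the process $Y_t:=X_{\tau_t}$ with the time-changed filtration $\Fcal_{\tau_t}^X$ is strong Markov.

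Next I would verify the Feller property for the semigroup $T_t f(x) := \Ee^x[f(X_{\tau_t})]$. Strong continuity at $t=0$ is immediate from $\tau_0=0$, the right-continuity of the paths of $X$ and dominated convergence. The $C_0$-property (mapping $C_0$ into $C_0$) requires continuity of $x\mapsto T_tf(x)$ and vanishing at infinity: continuity follows from the continuity of $\lambda$, the Feller property of $X$, and the fact that $\tau_t$ depends continuously on the path $X$ in a suitable sense (using that $S_X$ is strictly increasing with uniform bounds on $1/\lambda$, so inversion is continuous). Vanishing at infinity uses $\tau_t\le t\,\|\lambda\|_\infty$ together with the Feller property of the underlying semigroup $P_t$ of $X$.

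For the generator, the key device is Dynkin's martingale: for $f\in\Dcal(A)$ the process $M_t^f = f(X_t)-\int_0^t Af(X_s)\,ds$ is an $\Fcal_t^X$-martingale, hence by optional sampling $M_{\tau_t}^f$ is an $\Fcal_{\tau_t}^X$-martingale. A deterministic change of variables $u=\tau_s$, so that $du=\lambda(X_{\tau_s})\,ds$ because $S_X'(u)=1/\lambda(X_u)$, gives
\begin{gather*}
    \int_0^{\tau_t}Af(X_u)\,du \;=\; \int_0^t \lambda(X_{\tau_s})\,Af(X_{\tau_s})\,ds \;=\; \int_0^t (\lambda A f)(Y_s)\,ds.
\end{gather*}
Thus $f(Y_t)-\int_0^t (\lambda A f)(Y_s)\,ds$ is a martingale, which (via the usual $t\downarrow 0$ limit, using boundedness of $\lambda A f$ for $f\in\Dcal(A)$) identifies $\lambda(\cdot)A$ on $\Dcal(A)$ as a restriction of the generator $A^Y$ of $Y$.

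The last and most delicate step is to show that $(\lambda A,\Dcal(A))$ is closable in $C_0$ and that its closure coincides with $A^Y$, i.e., that $\Dcal(A)$ is a core. Closability follows because $A^Y$ is a closed extension. For the core property the cleanest route is to prove that $(\mu-\lambda A)\Dcal(A)$ is dense in $C_0(\real^d)$ for some $\mu>0$; since $\lambda$ is bounded and strictly positive, this reduces to the density of $(\mu/\lambda - A)\Dcal(A)$, which is the same as checking that for each $g\in C_0$ the equation $(\mu-\lambda A)f=g$ has an approximating sequence of solutions in $\Dcal(A)$ — this can be handled by rewriting it as a fixed-point/resolvent identity using the resolvent $R_\nu$ of $A$ and the bounded multiplication operator by $\lambda$, so that Neumann series arguments apply for $\mu$ large enough. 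I expect this final core/density step to be the main obstacle, as the other parts follow from standard CAF-based time-change theory and Dynkin's formula.
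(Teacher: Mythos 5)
First, a point of reference: the paper contains no proof of Proposition~\ref{prop1} at all --- it is quoted, with attribution, from \cite{BSW} and \cite{kurtz} --- so your proposal has to stand on its own merits. Its architecture is the standard one and is sound in outline: time change by the inverse of the additive functional, optional stopping of Dynkin's martingale to show that $\lambda A$ on $\Dcal(A)$ is a restriction of the generator $A^Y$ of the time-changed process (here $\tau_t\le \|\lambda\|_\infty t$ is a bounded stopping time and $\lambda\, Af\in C_0(\real^d)$ for $f\in\Dcal(A)$, so this part goes through), and the recognition that the range/core condition is the crux, after which ``a generator has no proper generating extension'' finishes the identification. But your very first inference is false: a strictly positive $\lambda\in C_b(\real^d)$ need \emph{not} be bounded away from zero on the non-compact space $\real^d$ --- take $\lambda(x)=(1+|x|^2)^{-1}$ --- and the proposition pointedly does not assume a lower bound (degenerate intensities are the whole point of the paper). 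What survives is only the one-sided bound $1/\lambda\ge 1/\|\lambda\|_\infty$, which yields $S_X(\infty)=\infty$ and $\tau_t\le\|\lambda\|_\infty t$; boundedness of $1/\lambda(X_s)$ holds merely locally along each path. In particular, your reduction of the range condition by dividing $(\mu-\lambda A)f=g$ by $\lambda$ is illegitimate: $g/\lambda$ need not lie in $C_0(\real^d)$, nor even be bounded.

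Second --- and this gap persists even if one adds the hypothesis $\inf\lambda>0$ --- the Neumann series ``for $\mu$ large enough'' does not converge. Set $c=\mu/\lambda$, $\mu_0=\inf c=\mu/\|\lambda\|_\infty$, and rewrite $(c-A)f=g/\lambda$ as $f=R_{\mu_0}\bigl(g/\lambda\bigr)-R_{\mu_0}\bigl((c-\mu_0)f\bigr)$. The perturbation has norm at most $\|c-\mu_0\|_\infty\,\|R_{\mu_0}\|\le \|\lambda\|_\infty/\inf\lambda-1$, which is \emph{independent of} $\mu$: the coefficient $\mu/\lambda$ and the resolvent bound $\|R_{\mu_0}\|\le 1/\mu_0$ both scale linearly in $\mu$, so enlarging $\mu$ buys nothing, and the series converges only under the oscillation condition $\|\lambda\|_\infty<2\inf\lambda$. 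The classical repair is Dorroh's theorem, proved by interpolating along $\lambda^\theta$, $\theta\in[0,1]$, in steps small enough that each successive ratio is uniformly close to $1$; but this still requires $\inf\lambda>0$. For $\lambda$ that is merely bounded, continuous and strictly positive, the cited sources avoid a head-on Hille--Yosida range check altogether: Kurtz proceeds through well-posedness of the martingale problem for $\lambda A$ (uniqueness obtained by inverting the time change), while \cite{BSW} argue at the level of the time-changed semigroup itself. Relatedly, your Feller-property step is asserted rather than proved: continuity of $x\mapsto \Ee^x[f(X_{\tau_t})]$ requires weak continuity of $x\mapsto\Pp^x$ on the Skorokhod space combined with a.s.\ continuity there of the functional $\omega\mapsto \omega(\tau_t(\omega))$, which is genuine work and not a corollary of the Feller property of $X$ alone.
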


Due to the different recurrence and transience behaviour of one- and multi-dimensional diffusion processes, we restrict ourselves to the case $d=1$; for simplicity, we consider the case where $X_t=B_t$ is a standard Wiener process. Throughout, we assume that $\lambda:\real\to [0,\infty)$ is a positive and measurable function which is Lebesgue a.e.\ strictly positive, i.e.\ $\mathrm{Leb}\{x:\lambda(x)=0\}=0$. In our main theorem, we do not require that $\lambda$ is bounded or bounded away from zero. Since $B_t$ has a transition density w.r.t.\ Lebesgue measure, the a.e.\ strict positivity of $\lambda$ guarantees that $S_B(t,\omega)$ is a.s.\ strictly increasing and continuous; thus, $\tau_t$ is also strictly increasing and continuous. Finally, we assume that $1/\lambda$ is locally integrable but not necessarily of class $L^1(\real,dx)$. Note that under these assumptions, the time changed process $B_{\tau_t}$ is a diffusion process and a martingale w.r.t.\ the filtration $\Fcal_{\tau_t}$; in fact, writing $Y_t = B_{\tau_t}$, we have
\begin{gather}\label{mart-repres}
    Y_t=Y_0+\int_0^t \sqrt{\lambda(Y_s)}\,d\tilde{B}_s,
\end{gather}
with some Wiener process $\tilde{B}$.

Our aim is to establish functional limit theorems for the process $B_{\tau_t}$ without resorting to the martingale representation \eqref{mart-repres} or the Feller property and the form of its generator; instead, we want to focus on the pathwise representation as superposition of two stochastic processes. Using this approach, we do not need that the time change $\tau$ and the original process $X$ are independent (e.g.\ as in Bochner's subordination) nor regularity of $\lambda$ (as in the Feller case) or non-degeneracy (as for the SDE approach). There exists a substantial literature on random-time changes, using various perspectives. Let us mention but a few of them: The monograph \cite{Chen} treats time changes of symmetric Markov processes, and \cite{harl} describes  the representation  of semi-Markov processes  in the form of a Markov process, transformed by a time change. The paper \cite{Magdziarz} is  discusses the asymptotic behaviour of the standard one-dimensional Brownian motion time changed by the (generalized) inverse of an (independent) subordinator. A detailed bibliography on the properties of processes after   time change  is contained in the paper  \cite{amp}.

Let us briefly outline the present note. Section \ref{sec-1} contains the main results on functional limit theorems for the time changed Wiener process. In Section \ref{sub1} we briefly recall (a modification of) a theorem on weak convergence of superposition of stochastic processes by \cite{Silvestrov}. This is then applied to the situation at hand, beginning with relatively strong assumptions on $\lambda$ (Section \ref{sub2}, existence of the limits $\lambda(\pm\infty)$) which are replaced in Section~\ref{sub3} by integral conditions which allow us to consider, e.g.\ polynomially growing or periodic $\lambda$'s. The latter case is related to the theory of homogenization (see, e.g.\ \cite{arm} and the references given there). The appendix contains, for the readers' convenience, some auxiliary results from the literature.

\section{Functional limit theorems for a time changed Wiener process}\label{sec-1}

Consider a one-dimensional Wiener process  $B=\{ B_t, \, t\geq0 \}$, starting from $0$, and denote by $\Fcal^B = \{\Fcal^{B}_{t}, \, t\geq0 \}$ be its natural filtration. Throughout, $\lambda:\real\to [0,\infty)$ is a  measurable function such that $f:\real\to (0,\infty]$, $f(x) := \lambda(x)^{-1}$ is locally integrable. Under these assumptions, the following additive functional of Brownian motion $S_B(x) = \int_{0}^{x} \lambda(B_s)^{-1}\,ds$ is well-defined. Denote by $\Lcal_t^a(B)$ the local time of the Wiener process $B$ at the point $a$ and up to time $t$. Since $B$ is recurrent, $\Lcal_\infty^a(B)=\infty$ a.s., and so
\begin{gather}\label{local}
    S_B(\infty)
    =\int_{0}^{\infty} \frac{ds}{\lambda(B_s)}
    = \int_{\real} \Lcal_\infty^a(B) \frac{da}{\lambda (a)}
    = \infty
    \quad \text{a.s.}
\end{gather}
By $\tau_t  = \inf \{ x>0: \, S_B(x) > t \}$ we denote the generalized right-inverse of the additive functional $S_B$. As usual, $\inf\emptyset:=\infty$; note that $\tau_0 = 0$. Since $S_B(x)$ is continuous and strictly increasing, we have
\begin{align*}
    \{ \tau_t \leq u \} &= \left \{S_B(u)=\int_{0}^{u} \frac{ds}{\lambda(B_s)} \geq t \right \} \in \Fcal_{u}^{B}.
\end{align*}
Therefore, $\tau_t$ is a stopping time w.r.t.\ the filtration $\Fcal^B$; moreover,  $\tau_t < \infty$ a.s., and we can consider the time changed process $B_{\tau_t}$.  We are going to apply to this process limit theorems for the superposition of stochastic  processes. Let us point out that, in general, $B$ and $\tau$ are not independent nor is $B_{\tau_t}$ a   square-integrable martingale.   In view of Wald's identities, a necessary and sufficient condition for the latter is $\Ee\left[\tau_t\right]<\infty$. The following lemma gives a typical sufficient condition for $\Ee\left[\tau_t\right]<\infty$.

\begin{lemma}
    If $\lambda$ has a unform lower bound $ \lambda(x) \geq c >0$  for all $x \in \real$, then $\Ee\left[\tau_t\right]<\infty$ for all $t>0$.
\end{lemma}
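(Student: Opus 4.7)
The plan is to use Wald's second identity to reduce $\Ee[\tau_t]<\infty$ to the $L^2$-integrability of $Y_t:=B_{\tau_t}$, and then to establish the latter via It\^o's formula applied to the martingale representation \eqref{mart-repres}. Since $(B_u^2-u)_{u\geq 0}$ is a continuous $\Fcal^B$-martingale and $\tau_t$ is an $\Fcal^B$-stopping time with $\tau_t<\infty$ a.s., optional sampling at the bounded stopping times $\tau_t\wedge n$ gives $\Ee[B_{\tau_t\wedge n}^2]=\Ee[\tau_t\wedge n]$ for each $n\geq 1$. Monotone convergence on the right yields $\Ee[\tau_t]=\lim_n \Ee[B_{\tau_t\wedge n}^2]$, so it suffices to show $\sup_n \Ee[B_{\tau_t\wedge n}^2]<\infty$.

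For this, introduce the auxiliary function $g\in C^2(\real)$ with $g(0)=g'(0)=0$ and $g''(y)=2/\lambda(y)$; then $\tfrac12 g''\lambda\equiv 1$, and It\^o's formula via \eqref{mart-repres} gives
\[
    g(Y_u) = u + \int_0^u g'(Y_s)\sqrt{\lambda(Y_s)}\,d\tilde{B}_s.
\]
Localizing at $\sigma_n=\inf\{s:|Y_s|\geq n\}$ and taking expectations delivers two It\^o identities, $\Ee[g(Y_{u\wedge\sigma_n})]=\Ee[u\wedge\sigma_n]$ and $\Ee[Y^2_{u\wedge\sigma_n}]=\Ee[\tau_{u\wedge\sigma_n}]$. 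The assumption $\lambda\geq c$ forces $g''\leq 2/c$ and hence $g(y)\leq y^2/c$; combined with the first identity this gives the clean bound $c\,\Ee[u\wedge\sigma_n]\leq\Ee[Y^2_{u\wedge\sigma_n}]$, an integrated version of the pathwise inequality $\tau_u\geq cu$ which in turn follows from $S_B(u)\leq u/c$.

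The hardest step is to upgrade these identities to a \emph{uniform-in-$n$ upper bound} on $\Ee[Y^2_{u\wedge\sigma_n}]$. The inequality $g(y)\leq y^2/c$ bounds $\Ee[u\wedge\sigma_n]$ from above by $\Ee[Y^2_{u\wedge\sigma_n}]$ but not in the reverse direction, so closing the bootstrap requires an additional input. My plan is to exploit the non-degeneracy of the SDE \eqref{mart-repres} (guaranteed precisely by $\lambda\geq c$) through a Feller non-explosion test together with a pathwise comparison dominating $|Y|$ by a constant multiple of a Bessel-type process, after which the bound $\sup_n\Ee[Y^2_{u\wedge\sigma_n}]<\infty$ follows by a BDG estimate. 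This comparison step is where the lower bound on $\lambda$ enters essentially, and it is the technical heart of the proof.
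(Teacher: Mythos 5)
Your reduction via Wald and optional sampling is sound as far as it goes, but it merely reformulates the problem: by your own chain of identities, $\sup_n \Ee\bigl[B^2_{\tau_t\wedge n}\bigr]<\infty$ is \emph{equivalent} to $\Ee[\tau_t]<\infty$, and the step you defer to a ``Feller non-explosion test together with a pathwise comparison'' is exactly the assertion to be proved. Moreover, that step runs in the wrong direction and cannot be carried out from $\lambda\ge c$ alone. A lower bound on $\lambda$ bounds the diffusion coefficient in \eqref{mart-repres} from \emph{below}; to dominate $|Y|$ pathwise by a Bessel-type process, or to close a BDG estimate on $\Ee[Y^2]$, you would need an \emph{upper} bound on $\sqrt{\lambda}$. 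You even derived the telltale pathwise consequence of the hypothesis, $\tau_u\ge cu$: a lower bound on $\lambda$ pushes the time change \emph{up}, so by itself it cannot yield $\Ee[\tau_t]<\infty$. Concretely, since $\tau_t=\langle Y\rangle_t=\int_0^t\lambda(Y_s)\,ds$, take $\lambda(y)=(1+y^2)^2\ge 1$: then $Y$ solves $dY_t=(1+Y_t^2)\,d\tilde{B}_t$, a classical strict local martingale (by Kotani's criterion, $\int_1^\infty y\,(1+y^2)^{-2}\,dy<\infty$), whence $\Ee[\tau_t]=\Ee[\langle Y\rangle_t]=\infty$ for some $t$. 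So no completion of your plan exists under the hypothesis as you read it; the ``technical heart'' is not merely hard, it is false. (There are also smaller unproved technicalities: $g''=2/\lambda$ is only measurable, so $g\notin C^2$ and you would need a generalized It\^o formula for the semimartingale $Y$, plus integrability of the localized stochastic integrals, which again requires upper control on $\lambda$.)

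Compare this with the paper's own proof, which is a two-line tail computation using no martingale theory at all: $\Ee[\tau_t]=\int_0^\infty\Pp\{\tau_t\ge x\}\,dx=\int_0^\infty\Pp\{S_B(x)\le t\}\,dx$, and the integrand vanishes for $x>t/c$ because of the pathwise bound $S_B(x)=\int_0^x ds/\lambda(B_s)\ge cx$; in other words $\tau_t\le t/c$ \emph{deterministically}, so $\Ee[\tau_t]\le t/c$. Note, however, that this pathwise bound amounts to the uniform lower bound $1/\lambda(x)\ge c$, i.e.\ the \emph{upper} bound $\lambda(x)\le 1/c$, not the lower bound appearing in the lemma's wording -- the paper's displayed computation (and, by the counterexample above, the truth of the statement itself) shows that this upper bound is the operative hypothesis. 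Your proposal took the wording literally and was thereby driven into heavy machinery aimed at a statement that is false as literally stated, whereas under the intended assumption $\tau_t$ is bounded by a constant and the lemma is immediate.
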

\begin{proof}
The claim follows easily from the following identity
\begin{align*}
    \Ee\left[\tau_t\right]
    = \int_{0}^{\infty} \Pp \left \{ \tau_t \geq x \right \} dx
    &= \int_{0}^{\infty} \Pp \left \{ \int_{0}^{x} \frac{ds}{\lambda(B_s)}  \leq t \right \} dx \\
    &= \int_{0}^{\frac{t}{c}} \Pp \left \{ \int_{0}^{x} \frac{ds}{\lambda(B_s)}   \leq t\right \} dx \leq \frac{t}{c}.
\qedhere
\end{align*}
\end{proof}

Since $t\mapsto\tau_t$ is continuous, $t\mapsto B_{\tau_t}$ and $t\mapsto B_{\tau_{nt}}$ are continuous, but we have not much further information. This means, in particular, that we have to use methods which do not rely on ($L^2$-)martingale methods or independence of the time change. The aim of the present paper is to identify the normalizing factor $\phi(n)\to\infty$ as $n\to\infty$ such that, for a suitable limiting process $\zeta=\{\zeta_t, t\ge 0\}$
\begin{gather*}
    \left\{\frac{B_{\tau_{nt}}}{\phi(n)},\, t\in[0,T]\right\} \xRightarrow[]{\phantom{\mathrm{d}}} \left\{\zeta_t, t\in[0,T]\right\}
    \quad\text{as\ } n \to \infty
\end{gather*}
in the sense of weak convergence of stochastic processes on any interval $[0,T]$, $T>0$. As usual, weak convergence is denoted by the symbol `$\xRightarrow[]{\phantom{\mathrm{d}}}$'.

\subsection{Functional limit theorem for the superposition of stochastic processes}\label{sub1}

We will use the following modification of theorem on weak convergence of superposition of stochastic processes, see Theorem 3.2.1 and its generalization in Section~3.2 of the book \cite{Silvestrov}. By `$\xRightarrow[]{\mathrm{d}}$' we denote the weak convergence of the finite-dimensional distributions, whereas `$\xRightarrow[]{\phantom{\mathrm{d}}}$' means weak convergence of probability measures.

\begin{theorem}\label{th-cond-1}
    Let $\left\{(Z_{n}(t), \nu_{n}(t)), t \geq 0\right\}_{n\geq 0}$ be a sequence of continuous stochastic processes such that $\nu_n(t)\geq 0$ for all $n\geq 1$, $t\geq 0$. Assume that the following conditions hold for all $T>0$
    \begin{enumerate}[\upshape i)]
    \item \label{th-cond-1i}
        There is some dense subset $U\subset [0,T]$ such that $0\in U$ and
        $\{(\nu_{n}(t) , Z_{n}(t)),\, t\in U\}  \xRightarrow[]{\mathrm{d}} \{(\nu_{0}(t) , Z_ {0}(t)), \,  t  \in U\}$ as $n \to \infty$.
    \item\label{th-cond-1ii}
        For every $\epsilon>0,\quad \displaystyle\lim_{h \downarrow 0} \limsup_{n\to \infty} \Pp \left\{\sup_{0 \leq t_1 \leq t_2 \leq (t_1+h) \wedge T} |Z_n (t_2) - Z_n (t_1)|> \epsilon \right\}=0$.
    \end{enumerate}
    Then, on every interval $[0,T]$, the superposition $\zeta_n (\cdot) := Z_n (\nu_n (\cdot))$ converges weakly:
    \begin{gather*}
        \zeta_n (\cdot) \xRightarrow[]{\phantom{\mathrm{d}}} \zeta_0 (\cdot) = Z_0 (\nu_0 (\cdot)).
    \end{gather*}
 \end{theorem}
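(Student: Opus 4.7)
The plan is to use the classical two-step scheme for weak convergence in $C[0,T]$: establish (a) convergence of the finite-dimensional distributions of $\zeta_n$ on a dense subset of $[0,T]$ and (b) tightness of their laws via the Billingsley modulus-of-continuity criterion. For (a), fix $s_1,\ldots,s_k\in U$; I would show $\bigl(Z_n(\nu_n(s_i))\bigr)_i \xRightarrow[]{\mathrm{d}} \bigl(Z_0(\nu_0(s_i))\bigr)_i$. Since $U$ is dense in $[0,T]$, $Z_0$ is continuous, and each $\nu_0(s_i)$ is a.s.\ finite, for any $\delta>0$ one can choose an interval $[0,K]$ containing all $\nu_0(s_i)$ with probability at least $1-\epsilon$ together with a finite $\delta$-net $\{u^{(j)}\}\subset U$ of $[0,K]$. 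Applying condition~(i) jointly to $\{s_i,u^{(j)}\}\subset U$ and passing to a Skorokhod realisation, one may assume the corresponding values converge almost surely. On the good event, pick $u^{(j)}$ closest to $\nu_0(s_i,\omega)$ and decompose
\[
Z_n(\nu_n(s_i))-Z_0(\nu_0(s_i)) = \bigl[Z_n(\nu_n(s_i))-Z_n(u^{(j)})\bigr]+\bigl[Z_n(u^{(j)})-Z_0(u^{(j)})\bigr]+\bigl[Z_0(u^{(j)})-Z_0(\nu_0(s_i))\bigr].
\]
The middle bracket tends to $0$ a.s.; the last is bounded by the modulus of continuity of $Z_0$ at scale $\delta$; and the first is small in probability by condition~(ii), since $|\nu_n(s_i)-u^{(j)}|\le 2\delta$ for large $n$.

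For (b), tightness of $\zeta_n(0)=Z_n(\nu_n(0))$ follows from the previous paragraph with $s=0\in U$. To control the modulus of continuity of $\zeta_n$, pick $M$ with $\Pp\{\nu_n(T)\le M\}\ge 1-\epsilon$ for all large $n$ (tightness of $\nu_n(T)$ follows from condition~(i)); on this event $\nu_n$ maps $[0,T]$ into $[0,M]$, so
\[
\sup_{|t_2-t_1|\le h}|\zeta_n(t_2)-\zeta_n(t_1)|\;\le\;\sup_{\substack{u,v\in[0,M]\\|u-v|\le\omega_{\nu_n}(h)}}|Z_n(u)-Z_n(v)|,
\]
where $\omega_{\nu_n}(h)$ denotes the modulus of continuity of $\nu_n$ on $[0,T]$. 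Condition~(ii), with $T$ replaced by $M$ (admissible since the hypotheses are posited for every $T>0$), then bounds the right-hand side provided $\omega_{\nu_n}(h)$ is itself asymptotically small.

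The main obstacle is exactly this last caveat: condition~(ii) controls only the modulus of $Z_n$, not that of $\nu_n$. In the intended application $\nu_n$ is continuous and \emph{nondecreasing} (a random time change), so finite-dimensional convergence on a dense set $U$ to the continuous limit $\nu_0$ can be upgraded, by a Dini-type argument under Skorokhod representation, to uniform convergence on $[0,T]$; this yields the required asymptotic equicontinuity of $\nu_n$. I therefore expect the announced ``modification'' of Silvestrov's theorem either to exploit this monotonicity tacitly or to carry an implicit modulus condition on $\nu_n$ within its hypotheses.
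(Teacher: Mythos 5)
The paper does not actually prove Theorem~\ref{th-cond-1}; it is quoted as a ``modification'' of Theorem~3.2.1 in Silvestrov's book, with a citation in place of an argument, so there is no in-paper proof to compare yours against. Judged on its own, your two-step scheme (finite-dimensional convergence via a $\delta$-net in $U$ plus tightness through the modulus of continuity) is the standard route to such composition theorems, and the obstacle you flag at the end is genuine, not a defect of your argument: conditions \ref{th-cond-1i}) and \ref{th-cond-1ii}) alone do \emph{not} imply the conclusion. Indeed, take $Z_n=Z_0=W$ a fixed Wiener process, $\nu_0(t)=t$, and the deterministic internal processes $\nu_n(t)=t+\Lambda_n(t)$, where $\Lambda_n$ is the tent function of height $1$ supported on $[\tfrac12-\tfrac1n,\tfrac12+\tfrac1n]$. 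Condition \ref{th-cond-1i}) holds with the dense set $U=[0,T]\setminus\{\tfrac12\}$ (for each fixed finite subset of $U$ the tent eventually misses all its points), and condition \ref{th-cond-1ii}) holds trivially; yet $\zeta_n(\tfrac12)=W(\tfrac32)$ for every $n$ while $\zeta_n(\tfrac12\pm\tfrac1n)\to W(\tfrac12)$, so $\{\zeta_n\}$ is not tight in $C[0,1]$ and the conclusion fails. Hence an additional hypothesis on $\nu_n$ is indispensable; in Silvestrov's setting the internal stopping processes are nondecreasing, which is exactly what your Dini/P\'olya upgrade (monotone functions converging pointwise on a dense set to a continuous limit converge locally uniformly) exploits. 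In the paper's application this is satisfied: $\nu_n(t)=\tau_{nt}/n$ is continuous and strictly increasing and the limit $\eta^{-1}$ is continuous, so your completed argument covers everything the paper actually uses.

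Two small repairs to your sketch. First, you apply the Skorokhod representation to finite-dimensional vectors only, so on the coupled space you cannot invoke condition \ref{th-cond-1ii}) pathwise for the bracket $Z_n(\nu_n(s_i))-Z_n(u^{(j)})$; since \ref{th-cond-1ii}) is a purely distributional statement and you only need a bound in probability, either estimate $\Pp\{|Z_n(\nu_n(s_i))-Z_n(u^{(j)})|>\epsilon\}$ directly on the original space, or (cleaner, once monotonicity is assumed) prove $Z_n\xRightarrow[]{\phantom{\mathrm{d}}}Z_0$ in $C[0,T']$ by Billingsley's criterion, prove $\nu_n\xRightarrow[]{\phantom{\mathrm{d}}}\nu_0$ in $C[0,T]$ by the monotone upgrade, deduce joint tightness of the pair from marginal tightness, and finish with the continuous-mapping theorem applied to $(z,\nu)\mapsto z\circ\nu$, which is continuous at pairs of continuous functions with no monotonicity needed at that final step. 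Second, your net $\{u^{(j)}\}$ of $[0,K]$ and the tightness of $\nu_n(T)$ require the dense set attached to a larger horizon $T'\geq K$ (and, if $T\notin U$, a monotone comparison $\nu_n(T)\leq\nu_n(t')$ with $t'\in U'\cap(T,T']$); since the hypotheses are assumed for all $T>0$, this is only bookkeeping.
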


\subsection{Functional limit theorems for the case of converging intensity $\lambda$}\label{sub2}

We are going to apply Theorem~\ref{th-cond-1} with $\phi(n) := \sqrt{n}$, $Z_n(t) := \frac{1}{\sqrt{n}}B_{nt}$ and $\nu_n(t) :=\frac{1}{{n}}\tau_{nt}$. In order to demonstrate the essence of our method, we want to keep things simple for the moment and we assume that the function $\lambda(s)$ has limits at $\pm \infty$ and that it is uniformly bounded away from zero. These conditions will then be relaxed in a second step.

\begin{theorem}\label{lim-thm-1}
    Let $\lambda:\real\to (0,\infty)$ be measurable such that $1/\lambda$ is locally integrable and there exist two numbers, $a_\pm >0$ such that
    \begin{gather*}
        \lim_{x\to\pm\infty}\lambda(x) = a_\pm.
    \end{gather*}
    If $ \lambda(x)\ge c>0$ for all $x \in \real$, then on any interval $[0,T]$
    \begin{gather*}
        \left\{\frac{B_{\tau_{nt}}}{\sqrt{n}},\,t\in[0,T]\right\}
        \xRightarrow[]{\phantom{\mathrm{d}}}
        \left\{W(\eta^{-1} (t)),\, t\in[0,T]\right\},
    \end{gather*}
    where $W$ is a Wiener process, and
    \begin{gather*}
        \eta(t) = \int_0^t \nu(W_s)\,ds,
        \quad
        \nu(x)= \frac{1}{a_+} \I_{(0,\infty)}(x) + \frac{1}{a_-} \I_{(-\infty,0)}(x).
    \end{gather*}
\end{theorem}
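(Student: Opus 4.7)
I would apply Theorem~\ref{th-cond-1} with
\begin{gather*}
Z_n(t) := \frac{1}{\sqrt n}\, B_{nt}, \qquad \nu_n(t) := \frac{1}{n}\,\tau_{nt},
\end{gather*}
so that $B_{\tau_{nt}}/\sqrt n = Z_n(\nu_n(t))$. By Brownian scaling, each $Z_n$ is a standard Wiener process $W$ in law; this simultaneously identifies the weak limit of $Z_n$ and makes condition~\eqref{th-cond-1ii} immediate from the uniform continuity of Brownian sample paths on $[0,T]$ (the probability in question does not even depend on $n$). Hence the real task is to identify the limit of the time change $\nu_n$ and to upgrade to joint convergence with $Z_n$ on a dense set $U\subset[0,T]$ containing $0$.

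For the limit, the substitution $u = s/n$ in the definition of $S_B$ gives
\begin{gather*}
\frac{1}{n}\, S_B(nt) \;=\; \int_0^t \frac{du}{\lambda(B_{nu})} \;=\; \int_0^t \frac{du}{\lambda(\sqrt n\, Z_n(u))}.
\end{gather*}
By the Skorokhod representation theorem I may work on a probability space on which $Z_n \to W$ a.s.\ uniformly on $[0,T]$. For each fixed $u>0$ we have $W_u \ne 0$ a.s., so $Z_n(u) \to W_u \ne 0$ and $\sqrt n\, Z_n(u) \to \sgn(W_u)\cdot\infty$. The hypothesis $\lim_{x\to\pm\infty} \lambda(x) = a_\pm$ then yields $1/\lambda(\sqrt n\, Z_n(u)) \to \nu(W_u)$ for a.e.\ $u \in [0,T]$, while the bound $\lambda \ge c$ supplies the dominating constant $1/c$. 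Dominated convergence gives $n^{-1} S_B(nt) \to \eta(t) = \int_0^t \nu(W_u)\,du$ a.s.\ for every fixed $t$; since the approximants are continuous and non-decreasing and $\eta$ is continuous, this pointwise convergence upgrades to locally uniform convergence on $[0,T]$ by P\'olya's theorem on monotone functions.

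Passing to inverses: $\eta$ is strictly increasing (because $\{u:W_u=0\}$ has Lebesgue measure zero a.s.\ and $\nu$ is strictly positive off $\{0\}$) and continuous, so uniform convergence of the continuous, strictly increasing functions $n^{-1}S_B(n\cdot)$ to $\eta$ implies uniform convergence of the inverses; hence $\tau_{nt}/n \to \eta^{-1}(t)$ locally uniformly on $[0,\eta(T))$ a.s. Combined with $Z_n \to W$ this provides joint uniform convergence $(Z_n,\nu_n)\to(W,\eta^{-1})$ on the coupled space, and transferring back yields convergence of finite-dimensional distributions on any dense $U\subset[0,T]$ with $0\in U$, verifying condition~\eqref{th-cond-1i}. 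Theorem~\ref{th-cond-1} then produces $B_{\tau_{nt}}/\sqrt n = Z_n(\nu_n(\cdot)) \Rightarrow W(\eta^{-1}(\cdot))$ on $[0,T]$.

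The main technical hurdle I foresee is the pointwise statement $1/\lambda(\sqrt n\, Z_n(u)) \to \nu(W_u)$: because $\lambda$ is only assumed measurable, with no regularity on compact sets, one really has to exploit both facts that (i) the Skorokhod coupling makes $Z_n(u)\to W_u$ pathwise, and (ii) Brownian motion avoids $0$ at every fixed positive time, so that the blown-up argument $\sqrt n\, Z_n(u)$ is forced out to $\pm\infty$ where the hypothesis on $\lambda$ finally bites. The uniform lower bound $\lambda\ge c$ makes the subsequent integration painless, and the remaining steps (the P\'olya upgrade, the inversion of monotone limits, and the invocation of Theorem~\ref{th-cond-1}) are entirely standard.
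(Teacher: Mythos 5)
Your argument is correct, but it verifies condition \ref{th-cond-1i}) of Theorem~\ref{th-cond-1} by a genuinely different route than the paper. You apply dominated convergence in the \emph{time} variable: writing $n^{-1}S_B(nt)=\int_0^t \lambda(\sqrt{n}\,Z_n(u))^{-1}\,du$, you exploit that at each fixed $u>0$ the Brownian motion is a.s.\ nonzero, so the blown-up argument tends to $\pm\infty$ and the hypothesis $\lambda(x)\to a_\pm$ bites, with the constant $1/c$ as majorant (a Fubini argument over $(u,\omega)$ disposes of the exceptional null sets); you then upgrade to locally uniform convergence by P\'olya's theorem for monotone functions and invert. The paper instead converts the time integral into a \emph{spatial} one by the occupation-times formula, $\int_0^{x}\lambda(B_{ns})^{-1}\,ds=\int_\real \Lcal^a_x(W)\,\lambda(\sqrt{n}\,a)^{-1}\,da$, applies dominated convergence in the space variable $a$ at fixed $x_i$, and deduces f.d.d.\ convergence of $\nu_n$ through the duality $\{\tau_{nt}/n\le x\}=\{n^{-1}S_B(nx)\ge t\}$. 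Two remarks on the comparison. First, your Skorokhod-representation step is superfluous: since every $Z_n$ is itself a standard Wiener process, the identity coupling $Z_n\equiv W$ already serves (this is precisely the paper's replacement of $W_n$ by $W$ inside the probabilities); on the other hand, your a.s.-convergence formulation of the joint limit $(\nu_n, Z_n)\to(\eta^{-1},W)$ neatly sidesteps the continuity-set issue that the paper glosses over (``Then, obviously'') when passing the probabilities $\Pp\{\eta(x_i)\ge t_i,\dots\}$ to the limit. Second, what the paper's local-time detour buys becomes visible in Theorem~\ref{lim-thm-2}: once the lower bound $\lambda\ge c$ is dropped, your time-domain dominated-convergence step has no integrable majorant and would fail, whereas the spatial representation lets the possible degeneracy of $\lambda$ near the origin be controlled by the a.s.\ H\"older continuity of $a\mapsto\Lcal^a_x(W)$. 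So the paper's route is the one that generalizes, while yours is the more elementary and self-contained proof of the theorem as stated.
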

\begin{proof}
Let us check the conditions \ref{th-cond-1i}), \ref{th-cond-1ii}) of Theorem \ref{th-cond-1}. Under our assumptions, we have for any $n \geq 1$ \begin{gather*}
    Z_n (t)
    =\frac{B_{nt}}{\sqrt{n}}
    = W_n (t),
\end{gather*}
for some Wiener process $W_n$.

\medskip\noindent
Condition \ref{th-cond-1i}): Note that the processes $Z_n$ and $\frac 1n\tau_{n\,\cdot}$ are continuous. Furthermore, for any $k \geq 1$, any $0 \leq t_1 < t_2 <\ldots< t_k \leq T$ and any $x_i, z_i \in \real$, $1 \leq i \leq k$, we have
\begin{align*}
    \Pp &\left\lbrace \frac{\tau_{n t_i}}{n} \leq x_i, \; W_n (t_i) \leq z_i, \; \forall 1\leq i \leq k  \right\rbrace \\
    &=\Pp \left\lbrace \int_0^{nx_i} \frac{ds}{\lambda(B_s)} \geq nt_i, \; W_n (t_i) \leq z_i, \; \forall 1\leq i \leq k  \right\rbrace \\
    &=\Pp \left\lbrace \int_0^{x_i} \frac{ds}{\lambda(B_{ns})} \geq t_i, \; W_n (t_i) \leq z_i, \; \forall 1\leq i \leq k  \right\rbrace \\
    &= \Pp \left\lbrace \int_0^{x_i} \frac{ds}{\lambda(\sqrt{n} W_n (s))} \geq t_i, \; W_n (t_i) \leq z_i, \; \forall 1\leq i \leq k  \right\rbrace \\
    &= \Pp \left\lbrace \int_0^{x_i} \frac{ds}{\lambda(\sqrt{n} W (s))} \geq t_i, \; W (t_i) \leq z_i, \; \forall 1\leq i \leq k  \right\rbrace,
\intertext{where $W$ is a standard Wiener process. Further,}
    \Pp&\left\lbrace \int_0^{x_i} \frac{ds}{\lambda(\sqrt{n} W (s))} \geq t_i, \; W (t_i) \leq z_i, \; \forall 1\leq i \leq k  \right\rbrace \\
    &= \Pp \left\lbrace \int_{\real} \Lcal^a_{x_i}(W)  \frac{da}{\lambda(\sqrt{n} a )}  \geq t_i, \; W (t_i) \leq z_i, \; \forall 1\leq i \leq k\right\rbrace.
\end{align*}
As before,  $\Lcal^a_{x}(W)$  is a local time of Wiener process at point $a$ in the time interval $[0,x]$. The assumptions $\lambda(x) \to a_\pm$ as $x \to \pm \infty$ and $\lambda(x) \geq c > 0$ enable us to apply Lebesgue's dominated convergence theorem. This gives, a.s.
\begin{align}\label{lim-dc}
    \lim_{n\to\infty} \int_{\real} \Lcal^a_{x_i}(W)  \frac{da}{\lambda(\sqrt{n} a )}
    = \frac 1{a_-} \int_{-\infty}^{0} \Lcal^a_{x_i}( W )\, da + \frac 1{a_+} \int_{0}^{\infty} \Lcal^a_{x_i}(  W )\, da
    =: \eta(x_i).
\end{align}
Then, obviously,
\begin{align*}
    \Pp &\left\lbrace \int_{\real} \Lcal^a_{x_i}(  W )  \frac{da}{\lambda(\sqrt{n} a )}  \geq t_i, \; W (t_i) \leq z_i, \;\forall 1\leq i \leq k\right\rbrace \\
    &\xrightarrow[n\to\infty]{} \Pp \big\lbrace \eta(x_i) \geq t_i, \; W (t_i) \leq z_i, \;\forall 1\leq i \leq k\big\rbrace
\end{align*}
for any $k\geq 1$, $t_i \in [0,T]$, $x_i \geq 0$ and $z_i \in \real$.

Note that the process $\eta(x)$ can be   represented as   $\eta(x) = \int_0^{x} \nu(W_s)\,ds$, where $\nu(y)= a_+^{-1} \I_{(0,\infty)}(y) + a_-^{-1} \I_{(-\infty,0)}(y)$. Evidently, $\eta(x+s)-\eta(x)>0$ for any $x>0$ and $s>0$ with probability $1$; moreover, $x\mapsto \eta(x)$ is a continuous function, and so the trajectories of $\eta$ are strictly increasing in $x>0$ with probability $1$. Thus, the inverse process $\zeta(t):= \eta^{-1} (t)$ exists, and is a continuous process. In particular, we see that both limit processes are continuous. Finally,
\begin{align*}
    \Pp&\left\lbrace \frac{\tau_{n t_i}}{n} \leq x_i, \; W_n (t_i) \leq z_i, \;\forall 1\leq i \leq k  \right\rbrace\\
    &\xrightarrow[n\to\infty]{} \Pp \big\lbrace \zeta(t_i) \leq x_i , \; W (t_i) \leq z_i, \;\forall 1\leq i \leq k  \big\rbrace,
\end{align*}
and the first condition is established.

\medskip\noindent
Condition \ref{th-cond-1ii}): Obviously, we have for every $\epsilon>0$
\begin{align*}
    \lim_{h \downarrow 0} \limsup\limits_{n\to \infty} & \,\Pp \left\lbrace \sup \limits_{0 \leq t_1 \leq t_2 \leq (t_1+h) \wedge T} |S_n (t_2) - S_n (t_1)|> \epsilon\right\rbrace \\
    &=\lim_{h \downarrow 0}  \Pp \left\lbrace \sup \limits_{0 \leq t_1 \leq t_2 \leq (t_1+h)  \wedge T} |W(t_2) - W (t_1)|> \epsilon \right\rbrace=0.
\end{align*}
This gives condition \ref{th-cond-1ii}), and an application of Theorem~\ref{th-cond-1} finishes the proof.
\end{proof}

\begin{remark} If the limits   $a_\pm$  of $\lambda(x)$ at $\pm \infty$ coincide,   we are in   a situation  which resembles the central limit theorem and the invariance principle. If, however, the limits are different, we get, in the limit, a process which is equivalent in measure to a skew Brownian motion.
\end{remark}

Our next aim is to avoid the condition that the intensity   $\lambda$ is separated from zero.   From now on, we will only assume that $\lambda$ is   (Lebesgue a.e.)  strictly positive with strictly positive limits as $x\to\pm\infty$ and   that $1/\lambda$  is locally integrable.

\begin{theorem}\label{lim-thm-2}
    Let $\lambda:\real\to (0,\infty)$ be measurable such that $1/\lambda$ is locally integrable and there exist two numbers, $a_\pm >0$ such that
    \begin{gather*}
        \lim_{x\to\pm\infty}\lambda(x) = a_\pm.
    \end{gather*}
    Then, on any interval $[0,T]$
    \begin{gather*}
        \left\{\frac{B_{\tau_{nt}}}{\sqrt{n}},\,t\in[0,T]\right\}
        \xRightarrow[]{\phantom{\mathrm{d}}}
        \left\{W(\eta^{-1} (t)),\, t\in[0,T]\right\},
    \end{gather*}
    where $W$ is a Wiener process, and $\eta$, $\nu$ are as in Theorem~\ref{lim-thm-1}.
\end{theorem}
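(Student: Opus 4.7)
The proof will follow the same template as Theorem~\ref{lim-thm-1}: apply Theorem~\ref{th-cond-1} with $Z_n(t) = B_{nt}/\sqrt n$ and $\nu_n(t) = \tau_{nt}/n$. Condition~\ref{th-cond-1ii}) concerns only the modulus of continuity of Brownian motion and does not involve $\lambda$, so it goes through unchanged. Similarly, the Brownian scaling and occupation-time-formula reductions in the proof of Theorem~\ref{lim-thm-1} remain valid, and the entire task reduces to establishing the a.s.\ convergence
\begin{gather*}
    I_n(x) := \int_{\real} \Lcal^a_{x}(W)\,\frac{da}{\lambda(\sqrt{n}\,a)} \xrightarrow[n\to\infty]{} \frac{1}{a_-}\int_{-\infty}^{0}\Lcal^a_{x}(W)\,da + \frac{1}{a_+}\int_{0}^{\infty}\Lcal^a_{x}(W)\,da
\end{gather*}
for each fixed $x>0$. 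The obstacle is that the previous proof bounded $1/\lambda(\sqrt{n}a)$ by the integrable (in $a$) dominant $c^{-1}\Lcal^a_x(W)$, which is no longer available.

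The main idea is to split the integration domain at $|a|=\delta$ and exploit the asymptotic behaviour of $\lambda$ at infinity on the one piece, and a change of variables combined with local integrability on the other. Fix $\omega$ (in a set of probability one) and write $g(a) := \Lcal^a_{x}(W)(\omega)$; it is continuous, bounded, and supported in $[-M,M]$ with $M := \sup_{s\le x}|W_s(\omega)|<\infty$. Because $\lambda(y)\to a_\pm$ as $y\to\pm\infty$, there exists $R_0$ so that $\lambda(y)\ge \tfrac12\min(a_+,a_-)$ for $|y|\ge R_0$. Hence for any $\delta>0$ and $n\ge (R_0/\delta)^2$, the function $1/\lambda(\sqrt{n}\,a)$ is bounded by $K := 2/\min(a_+,a_-)$ on $|a|\ge\delta$, and pointwise converges to $\nu(a)$. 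Dominated convergence then gives
\begin{gather*}
    \int_{|a|\ge\delta} g(a)\,\frac{da}{\lambda(\sqrt{n}\,a)} \xrightarrow[n\to\infty]{} \int_{|a|\ge\delta} g(a)\nu(a)\,da.
\end{gather*}

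The remaining piece $J_n(\delta) := \int_{|a|<\delta} g(a)\,\lambda(\sqrt{n}\,a)^{-1}\,da$ is the only place where the absence of a lower bound on $\lambda$ bites. I would substitute $b=\sqrt{n}\,a$ to get
\begin{gather*}
    |J_n(\delta)| \le \frac{\|g\|_\infty}{\sqrt n}\int_{-\sqrt n\,\delta}^{\sqrt n\,\delta}\frac{db}{\lambda(b)}.
\end{gather*}
Now the local integrability of $1/\lambda$ together with the bound $1/\lambda(b)\le K$ for $|b|\ge R_0$ yields $F(R):=\int_{-R}^{R}\lambda(b)^{-1}\,db\le C(1\vee R)$ for a constant $C$ depending only on $\lambda$ and $R_0$. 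Consequently,
\begin{gather*}
    |J_n(\delta)| \le C\|g\|_\infty\bigl(\tfrac{1}{\sqrt n}\vee\delta\bigr),
\end{gather*}
so $\limsup_{n\to\infty}|J_n(\delta)|\le C\|g\|_\infty\,\delta$. Sending first $n\to\infty$ and then $\delta\to 0$ in both pieces establishes the claimed limit for $I_n(x)$.

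With this convergence in hand, the rest of the proof of Theorem~\ref{lim-thm-1} carries over verbatim: one deduces the required finite-dimensional convergence in \ref{th-cond-1i}), checks that $\eta(x)=\int_0^x \nu(W_s)\,ds$ has the claimed representation and a continuous strictly increasing inverse $\zeta=\eta^{-1}$, and applies Theorem~\ref{th-cond-1} to obtain weak convergence on $[0,T]$. The hardest step, and the only genuinely new input, is the uniform-in-$n$ control of the contribution near $a=0$ described above; the asymptotics $\lambda(x)\to a_\pm>0$ and the local integrability of $1/\lambda$ together do exactly the work previously done by the uniform lower bound $\lambda\ge c$.
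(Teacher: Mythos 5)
Your proposal is correct and follows essentially the same route as the paper: both proofs reduce everything to the limit of $\int_{\real}\Lcal^a_{x}(W)\,\lambda(\sqrt{n}\,a)^{-1}\,da$, split the integral near $a=0$, rescale via $b=\sqrt{n}\,a$, and combine local integrability of $1/\lambda$ with the eventual lower bound on $\lambda$ forced by the limits $a_\pm>0$. The only difference is cosmetic: the paper uses an $n$-dependent cutoff $n^{-\beta}$ (with $0<\beta<1/2$) and an explicit $\epsilon$-estimate using $\int_\real\Lcal^a_x(W)\,da=x$, whereas you use a fixed cutoff $\delta$ with dominated convergence on $\{|a|\ge\delta\}$ followed by $\delta\downarrow 0$.
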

\begin{proof}
The only place in the proof of Theorem \ref{lim-thm-1} where the condition $\lambda(x) \geq c > 0$, $x \in \real$, is used, is the limit \eqref{lim-dc} which we calculated by dominated convergence. We will now show how we can avoid this theorem.

It is enough to consider the integral $\int_{0}^\infty \Lcal^a_{x_i}(  W ) \lambda(\sqrt{n} a )^{-1}\,da$, since we can handle the integral $\int_{-\infty}^0 \Lcal^a_{x_i}(  W ) \lambda(\sqrt{n} a )^{-1}\,da$ in exactly the same way. Fix $0<\beta<\frac 12$; for any $\epsilon>0$ there exist some $n_0=n_0(\epsilon)$ such that
\begin{gather*}
    \left|\frac{1}{\lambda(x)}- \frac 1{a_+} \right|<\epsilon
    \quad\text{for every\ } x>n^{1/2-\beta}_0.
\end{gather*}
Using $\int_{\real}\Lcal^a_{x}(B)da=x$ for every $x>0$, we see that for any $n>n_0$
\begin{align}
&\notag\left|\int_{0}^\infty \Lcal^a_{x_i}(W) \frac{da}{\lambda(a\sqrt{n})} - \frac 1{a_+} \int_{0} ^{+ \infty} \Lcal^a_{x_i}(W)\,da\right|\\
&\notag\leq  \int_{0}^{n^{-\beta}} \Lcal^a_{x_i}(W)\frac{da}{\lambda(a\sqrt{n})}
    +\frac 1{a_+} \int_{0}^{n^{-\beta}} \Lcal^a_{x_i}(W)\,da
    +\int_{n^{-\beta}}^{\infty} \Lcal^a_{x_i}(W) \left|\frac{1}{\lambda(a\sqrt{n})}-\frac 1{a_+} \right| da\\
&\notag\leq \sup_{0\leq a\leq n^{-\beta}} \Lcal^a_{x_i}(W) \frac 1{\sqrt n}\int_0^{n^{1/2-\beta}}\frac{da}{\lambda(a)}
    +\frac 1{a_+} \int_{0}^{n^{-\beta}} \Lcal^a_{x_i}(W)\,da
    +\epsilon \int_{n^{-\beta}}^{\infty} \Lcal^a_{x_i}(W)\,da\\
&\label{ineq-for-int}\leq \sup_{0\leq a\leq 1}\Lcal^a_{x_i}(W) \frac 1{\sqrt n}\int_0^{n^{1/2-\beta}}\frac{da}{\lambda(a)}
    + \frac 1{a_+} \int_{0}^{n^{-\beta}} \Lcal^a_{x_i}(W)\,da
    + \epsilon x_i.
\end{align}

We will now let $n\to \infty$ in \eqref{ineq-for-int}. It is well known, see e.g.\ \cite{rev-yor}, that local time $\Lcal^a_{x}(W)$ of the Wiener process has a modification which is locally H\"{o}lder continuous in $a$ up to order $1/2$. Therefore, $\sup_{0\leq a\leq 1}\Lcal^a_{x_i}(W)$ is bounded a.s., and
\begin{gather}\label{auxiliary}
    \lim_{n\to\infty} \frac 1{\sqrt n}\int_0^{n^{1/2-\beta}}\frac{da}{\lambda(a)}
    \leq
    \lim_{n\to\infty}\left(\frac 1{\sqrt n}\int_0^{n_0^{1/2-\beta}}\frac{da}{\lambda(a )} + \left(\frac{1}{a_+}+\epsilon\right) \frac 1{n^{\beta}}\right)
    =0.
\end{gather}
Since $\epsilon$ is arbitrary, \eqref{ineq-for-int} and \eqref{auxiliary} show that $\lim\limits_{n\to\infty}\int_{0}^\infty \Lcal^a_{x_i}(W) \frac{da}{\lambda(a\sqrt{n})} = \frac 1{a_+} \int_{0} ^{+ \infty} \Lcal^a_{x_i}(W)\,da$, and we get \eqref{lim-dc}.
\end{proof}

\begin{remark}\label{rem-gen}
    Let us calculate the infinitesimal generator of the Markov process $Y^{(n)} = \left\{\frac{1}{\sqrt{n}}B_{\tau_{nt}},\,t\geq 0\right\}$, using the martingale representation \eqref{mart-repres}. The sequence of processes
    \begin{gather*}
        Y^{(n)}_t = \frac{B_{\tau_{nt}}}{\sqrt{n}} = \frac{Y_{nt}}{\sqrt{n}}
    \end{gather*}
    satisfies the equation
    \begin{align*}
        Y^{(n)}_t
        = \frac 1{\sqrt n} \int_0^{nt}\lambda^{1/2}(Y_s)\,d\tilde{B}_s
        &= \frac 1{\sqrt n} \int_0^t\lambda^{1/2}(Y_{nz})\,d\tilde{B}_{nz}\\
        &= \int_0^t\lambda^{1/2}\left(\sqrt nY^{(n)}_{z}\right)\,d\tilde{B}^{(n)}_{z},
    \end{align*}
    where $\tilde{B}^{(n)}_{z}=\frac{1}{\sqrt{n}}\tilde{B}_{nz}$ is a Wiener process. Therefore, the generator of $Y^{(n)}$ is
    $A^{(n)}f(x)= \lambda(x\sqrt n) f''(x)$;    if $\lambda$ has limits as $x\to\pm\infty$,   weak convergence of the processes $Y^{(n)}$ corresponds to the pointwise convergence of the generators. Let us describe the situation more precisely. Observe that
    \begin{gather*}
        \lambda^{(n)}(x) = \lambda(x\sqrt n)
        \xrightarrow[n\to\infty]{}  a_+\I_{(0,\infty)}(x) + a_-\I_{(-\infty,0)}(x) + a_0 \I_{\{0\}}(x).
    \end{gather*}
    The limit process satisfies the following stochastic differential equation
    \begin{gather}\label{limit-gen}
        Y_t
        = \int_0^t\left(\sqrt{a_+} \I_{(0,\infty)}(Y_s) + \sqrt{a_-}\I_{(-\infty,0)}(Y_s) + \sqrt{a_0} \I_{\{0\}}(Y_s)\right)d\tilde{B}_s,
    \end{gather}
    with some Wiener process $\tilde{B}$. If the coefficients $a_-,a_+,a_0$ are strictly positive, then the diffusion coefficient of this equation,
    \begin{gather}
        \sigma(x)=\sqrt{a_+}\I_{(0,\infty)}(x)+ \sqrt{a_-}\I_{(-\infty,0)}(x)+\sqrt{a_0}\I_{\{0\}}(x),
    \end{gather}
    is bounded, measurable   and uniformly bounded away from $0$.   According to the well-known  Krylov theorem (see, e.g.\ Proposition~1.15 in \cite{cher}), the equation \eqref{limit-gen} has a weak solution which is unique in law.

    Pick such a Wiener process $\tilde{B}$,   and consider the (pathwise)  solution of the SDE \eqref{limit-gen} driven by $\tilde B$.   Then for any $t>0$
    \begin{gather*}
        \int_0^t\I_{\{0\}}(Y_s)\,ds
        = \int_0^t\frac{\I_{\{0\}}(Y_s)}{\sigma^2(Y_s)}\,d\langle Y\rangle_s
        = \int_{\real} \frac{\I_{\{0\}}(x)}{\sigma^2(x)} \Lcal^x_t(Y)\,dx
        = 0
        \quad\text{a.s.}
    \end{gather*}
    as before, $\Lcal^x_t(Y)$ dentes the local time of $Y$ at the point $x$ and up to time $t$. This shows that   the law of $Y$ coincides with the law of any weak solution $Z$ of the following SDE driven by a standard Wiener process $W$
    \begin{gather*}
        Z_t
        = \int_0^t\left(\sqrt{a_+} \I_{(0,\infty)}(Z_s) + \sqrt{a_-} \I_{(-\infty,0)}(Z_s)\right)   dW_s.
    \end{gather*}

    Without loss of generality, we may change $\lambda$ at $x=0$ and assume that $\lambda(0)  =  0$,  because it will not change the value of $S_B(t)$ and all further calculations. Indeed, define $\frac{0}{0}:=0$. With this convention and by Fatou's lemma, we find for any $t>0$ and any standard Wiener process $W$
    \begin{gather*}
        \int_0^t \frac{\I_{\{0\}}(W_s)}{\lambda (W_s)}\, ds
        = \int_0^t\liminf_{\epsilon\downarrow 0}\frac{\I_{\{0\}}(W_s)}{\epsilon+\lambda (W_s)} \,ds
        \leq \liminf_{\epsilon\downarrow 0}\int_{\real} \Lcal^x_t(W)\frac{\I_{\{0\}}(x)}{\epsilon+\lambda (x)}\, dx
        = 0.
    \end{gather*}
\end{remark}

We will see in  Section~\ref{sub3} that the   convergence of the normalizing time change process does not require pointwise convergence of $\lambda(x)$ as $x\to\pm\infty$ but only some kind of mean convergence, cf.~\ref{theor-gen1ii}) in Theorem~\ref{theor-gen1}. The corresponding processes still converge in measure.

\subsection{Functional limit theorems in the case where the intensity  converges in average}\label{sub3}

We will now show that we can relax the conditions on $\lambda(\cdot)$ at infinity by an integral condition.   It is not hard to see that $\lim_{x\to\infty} \lambda(x)=a_+>0$ implies that $\lim_{x\to\infty} x^{-1}\int_0^x \lambda(s)^{-1}\,ds = a_+^{-1}$, while the converse is clearly not true. It turns out that an integral condition of this type is sufficient for the weak convergence of the time changed process.

\begin{theorem}\label{theor-gen1}
Assume that the following conditions are satisfied.
\begin{enumerate}[\upshape i)]
\item\label{theor-gen1i}
    Let $\lambda : \real \to [0,\infty)$ be a measurable function such that $\mathrm{Leb}\{x:\lambda(x)=0\}=0$ and $1/{\lambda}$ is    locally integrable.
\item\label{theor-gen1ii}
    There exist two numbers, $a_+, a_- >0$ such that for some $\delta>-1$ we have
    \begin{gather*}
        \lim_{x\to\infty}\frac{1}{x^{1+\delta}} \int_0^{x} \frac{ds}{\lambda(s)} =  \frac 1{a_+}
    \quad\text{and}\quad
        \lim_{x\to\infty}\frac{1}{x^{1+\delta}} \int_{-x}^0\frac{ds}{\lambda(s)} =  \frac 1{a_-}.
    \end{gather*}
\end{enumerate}
If \textup{\ref{theor-gen1i})--\ref{theor-gen1ii})} hold, then on any interval $[0,T]$
\begin{gather*}
    \left\{{n}^{-\frac{1}{2+\delta}}{B_{\tau_{nt}}}, \,t\in [0,T]\right\}
    \xRightarrow[]{\phantom{\mathrm{d}}}
    \left\{W(\eta_\delta^{-1} (t)),\, t\in[0,T]\right\},
\end{gather*}
where $W$ is a Wiener process, and
\begin{gather*}
    \eta_\delta(t)=\frac{2}{2+\delta}\int_0^t |W(s)|^{\delta} \nu(W(s))\, ds
     \quad\text{with}\quad
    \nu(w) = \frac 1{a+}\I_{(0,\infty)}(w)+\frac 1{a-}\I_{(-\infty,0)}(w).
\end{gather*}
\end{theorem}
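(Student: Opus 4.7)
The strategy is to apply the superposition theorem, Theorem~\ref{th-cond-1}, with the choices
\[
    Z_n(t) := n^{-\gamma} B_{n^{2\gamma} t},\qquad \nu_n(t) := n^{-2\gamma} \tau_{nt},\qquad \gamma := \frac{1}{2+\delta},
\]
so that, by Brownian scaling, each $Z_n$ is a standard Wiener process and $Z_n(\nu_n(t)) = n^{-1/(2+\delta)} B_{\tau_{nt}}$ is exactly the process whose weak limit we seek. Condition~\ref{th-cond-1ii}) of Theorem~\ref{th-cond-1} is then automatic since each $Z_n$ is a Wiener process. The real work is verifying the joint convergence $(\nu_n, Z_n) \xRightarrow[]{\mathrm{d}} (\eta_\delta^{-1}, W)$ of finite-dimensional distributions required by condition~\ref{th-cond-1i}).

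Following the argument used in Theorem~\ref{lim-thm-1}, I would express the joint event $\{\nu_n(t_i) \leq x_i,\ Z_n(t_i) \leq z_i\}$ in terms of $W_n := Z_n$. Using $S_B(\tau_\cdot) = \cdot$, the substitution $s = n^{2\gamma} u$, and the occupation times formula,
\[
    n^{-1} S_B(n^{2\gamma} x) = n^{-\delta\gamma} \int_0^x \frac{du}{\lambda(n^\gamma W_n(u))} = n^{-\delta\gamma} \int_\real \Lcal^a_x(W_n) \frac{da}{\lambda(n^\gamma a)}.
\]
The crux of the proof is to identify the a.s.\ limit of the right-hand side. Splitting at $a = 0$, I would study, for the positive part, the continuous increasing distribution function
\[
    \psi_n(c) := n^{-\delta\gamma} \int_0^c \frac{da}{\lambda(n^\gamma a)} = n^{-(1+\delta)\gamma}\int_0^{n^\gamma c} \frac{db}{\lambda(b)},
\]
and recognise via hypothesis~\ref{theor-gen1ii}) that $\psi_n(c) \to \psi(c)$ pointwise for every $c > 0$, where $\psi$ is a continuous, strictly increasing limit determined by $a_+$; the $a < 0$ part is handled identically with $a_-$. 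Polya's theorem upgrades pointwise convergence to uniform convergence on compacts, and since almost surely the support of $a \mapsto \Lcal^a_x(W)$ is contained in the bounded range $\{|a| \leq \sup_{s \leq x}|W_s|\}$, weak convergence of the corresponding Stieltjes measures on a compact interval yields
\[
    n^{-\delta\gamma} \int_\real \Lcal^a_x(W) \frac{da}{\lambda(n^\gamma a)} \xrightarrow[n\to\infty]{\text{a.s.}} \eta_\delta(x),
\]
where the occupation times formula rewrites the limit in the form stated in the theorem.

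With the a.s.\ limit identified, the remainder of the argument mirrors that of Theorem~\ref{lim-thm-1}: the strict positivity of $|W_s|^\delta \nu(W_s)$ on $\{W_s \neq 0\}$ (a set of full Lebesgue measure a.s.) implies that $\eta_\delta$ is strictly increasing and continuous, so $\eta_\delta^{-1}$ is a well-defined continuous process and the equivalence $\{\nu_n(t_i) \leq x_i\} = \{n^{-1}S_B(n^{2\gamma}x_i) \geq t_i\}$ passes to the limit as $\{\eta_\delta^{-1}(t_i) \leq x_i\}$. The joint convergence with $Z_n(t_i) = W_n(t_i)$ follows because the relevant events are jointly measurable functionals of the same Wiener path $W$. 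I expect the main obstacle to be the transition from the pointwise convergence $\psi_n \to \psi$ to the convergence of the Stieltjes integrals against the local time $\Lcal^a_x(W)$ under only an integral assumption on $\lambda$; this step relies on Polya's uniform-convergence upgrade together with the compact support and continuity of local time in the space variable, and is the place where hypothesis~\ref{theor-gen1ii}) enters decisively.
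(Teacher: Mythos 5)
Your proposal is correct in substance, but it takes a genuinely different route from the paper. The paper abandons the local-time representation at this point: it applies the generalized It\^o formula (Theorem~\ref{ITO}) to $F(x)=\int_0^x\int_0^y \lambda(s)^{-1}\,ds\,dy$, writes $\int_0^t \lambda(B_s)^{-1}\,ds = 2F(B_t)-2\int_0^t F'(B_s)\,dB_s$, rescales, handles the $F(B_t)$-term directly from hypothesis~\ref{theor-gen1ii}), and invokes Kurtz's theorem (Theorem~\ref{auxil-theo}) -- together with the growth bound $|\phi_n(u)|\le A(1+|u|^{1+\delta})$ extracted from~\ref{theor-gen1ii}) -- to pass to the limit \emph{in probability} in the stochastic integrals; a second application of the It\^o formula to $x^{2+\delta}\I_{(0,\infty)}(x)$ and $|x|^{2+\delta}\I_{(-\infty,0)}(x)$ then converts the limit back into a time integral. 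You instead stay with the occupation-density route of Theorems~\ref{lim-thm-1}--\ref{lim-thm-2}: hypothesis~\ref{theor-gen1ii}) is exactly pointwise convergence of the antiderivatives $\psi_n(c)=n^{-(1+\delta)\gamma}\int_0^{n^\gamma c}\lambda(b)^{-1}\,db \to c^{1+\delta}/a_+$, P\'olya upgrades this to locally uniform convergence of monotone functions to a continuous limit, and Helly--Bray convergence of the Stieltjes measures $d\psi_n$ against the continuous, compactly supported function $a\mapsto \Lcal^a_x(W)$ gives the limit \emph{almost surely}. This is sound and arguably more elementary: it needs no limit theorem for stochastic integrals and no growth bounds, and your key observation -- that only the distribution functions $\psi_n$, not the integrands $n^{-\delta\gamma}/\lambda(n^\gamma\,\cdot)$, need to converge -- is precisely what makes the local-time method survive under the mere integral hypothesis. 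Your exponent bookkeeping ($2\gamma-1=-\delta\gamma$ with $\gamma=1/(2+\delta)$) is correct, as is the argument that $\eta_\delta$ is a.s.\ strictly increasing, so the two proofs are on the same footing for the remaining fdd and tightness steps.

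One concrete caveat: do not assert that the occupation times formula "rewrites the limit in the form stated in the theorem" without computing the constant. Your own limit is $d\psi(a)=(1+\delta)a_+^{-1}a^{\delta}\,da$ on $(0,\infty)$, so the occupation times formula yields
\begin{gather*}
    n^{-\delta\gamma}\int_\real \Lcal^a_x(W)\,\frac{da}{\lambda(n^\gamma a)}
    \xrightarrow[n\to\infty]{\text{a.s.}}
    (1+\delta)\int_0^x |W_s|^{\delta}\,\nu(W_s)\,ds,
\end{gather*}
with constant $1+\delta$, not $\tfrac{2}{2+\delta}$ as in the statement; the two agree only for $\delta=0$. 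This is not a defect of your method: completing the paper's own final It\^o step (with $F_1'(x)=(2+\delta)x^{1+\delta}\I_{(0,\infty)}(x)$ and $F_1''(x)=(2+\delta)(1+\delta)x^{\delta}\I_{(0,\infty)}(x)$) also produces $(1+\delta)\int_0^x|W_s|^{\delta}\nu(W_s)\,ds$, and the exactly scaling-invariant case $\lambda(s)^{-1}=(1+\delta)a_+^{-1}|s|^{\delta}$, for which $\frac1n S_B(n^{2\gamma}x)$ has the same law for every $n$, confirms the factor $1+\delta$. So the constant in the theorem's display appears to be a misprint in the paper; in your write-up you should state the limit time change with the factor $1+\delta$ rather than claiming literal agreement with the displayed $\eta_\delta$.
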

\begin{proof}
Pick some $\alpha\in(0,1)$ and consider the process
\begin{gather*}
    \frac{B_{\tau_{nt}}}{ n^\alpha}=\frac{B_{n^{2\alpha}\frac{\tau_{nt}}{n^{2\alpha}}}}{ n^\alpha}.
\end{gather*}
In this case, the pre-limit Wiener processes have the form $\tilde{W}_n(s) = n^{-\alpha} B_{n^{2\alpha}s}$, and we have to check weak convergence of the finite dimensional distributions of   the process $\left\{\left(\tilde{W}_n(t), \, n^{-2\alpha} \tau_{nt}\right),\, t\in [0,T]\right\}$ or of $\left\{\left(\tilde{W}_n(t),\, \frac 1n\int_0^{n^{2\alpha}t} \lambda(B_s)^{-1}\,ds\right),\, t\in [0,T]\right\}$, $T>0$.

Consider the function $F(x)= \int_0^x \int_0^y \lambda(s)^{-1}\,ds\,dy$. Since $F'$ is continuous and $F''\left(x\right)$ exists Lebesgue a.e.\ and is locally integrable, we can apply the generalized It\^{o} formula to $F(B_t)$, see Theorem \ref{ITO}. This gives
\begin{gather*}
    F(B_t)=\int_0^t \int_0^{B_s} \frac{du}{\lambda(u)}\,dB_s + \frac12 \int_0^t \frac{ds}{\lambda(B_s)}.
\end{gather*}
Therefore,
\begin{gather}\label{equv-f}
    \int_0^t \frac{ds}{\lambda(B_s)}= 2F(B_t)-2 \int_0^t \int_0^{B_s} \frac{du}{\lambda(u)}\,dB_s.
\end{gather}
From \eqref{equv-f} we conclude that
\begin{align*}
    \frac 1n \int_0^{n^{2\alpha} x_i} \frac{ds}{\lambda(B_s)}
    &= \frac2n \int_0^{n^\alpha \tilde{W}_{n} (x_i)}  \int_0^y \frac{ds}{\lambda(s)}\, dy
       - \frac{2}{n^{1-\alpha}} \int_0^{x_i} \int_0^{n^\alpha \tilde{W}_n({s})} \frac{du}{\lambda(u)}\,d\tilde{W}_n({s})\\
    &= \frac{2}{n^{1-\alpha}}  \int_0^{  \tilde{W}_{n} (x_i)}  \int_0^{n^\alpha y} \frac{ds}{\lambda(s)} \, dy
       - \frac{2}{n^{1-\alpha}}  \int_0^{x_i} \int_0^{ {n}^\alpha \tilde{W}_{n}(s)}\frac{du}{\lambda(u)}\,d\tilde{W}_n(s).
\end{align*}
Consequently,
\begin{align*}
    &\Pp\left\lbrace \frac{\tau_{n t_i}}{n^{2\alpha}} \leq x_i, \; \tilde{W}_n (t_i) \leq z_i, \; \forall 1\leq i \leq k  \right\rbrace \\
    &=\Pp \left\lbrace \frac1n \int_0^{n^{2\alpha} x_i} \frac{ds}{\lambda(B_s )} \geq   t_i, \; \tilde{W}_n (t_i) \leq z_i, \;\forall 1\leq i \leq k \right\rbrace \\
    &=\Pp \left\{ \frac{2}{n^{1-\alpha}}\left[\int_0^{ {W} (x_i)}\int_0^{n^\alpha y} \frac{ds}{\lambda(s)}\,dy
       - \int_0^{x_i} \int_0^{n^\alpha W(s)} \frac{du}{\lambda(u)}\,dW(s)\right]\geq   t_i, \; W (t_i) \leq z_i,\right.\\
    &\left.\phantom{=\Pp \left\{ \frac{2}{n^{1-\alpha}}\left[\int_0^{ {W} (x_i)}\int_0^{n^\alpha y} \frac{ds}{\lambda(s)}\,dy
       - \int_0^{x_i} \int_0^{n^\alpha W(s)} \frac{du}{\lambda(u)}\,dW(s)\right]\geq   t_i, \;\right.}   \;\forall 1\leq i \leq k  \right\}.
\end{align*}
From the conditions \ref{theor-gen1i}) and \ref{theor-gen1ii}) we conclude that $\alpha=1/(2+\delta)$, and that
\begin{align*}
    \frac{2}{n^{1-\alpha}} \int_0^{W(x)} \int_0^{n^\alpha y} \frac{ds}{\lambda(s)} \, dy
    \xrightarrow[n\to\infty]{}
     \frac 2{a_+} \int_0^{|W(x)|} y^{1+\delta}\,dy
    = \frac{2}{2+\delta}|W(x)|^{2+\delta}\nu(W(x))
\end{align*}
with probability $1$. Now we consider the continuous square-integrable martingale $M_n (x) := \int _0^x \phi _n (W_s) \, dW_s$, where
\begin{gather}\label{phi}
    \phi_n(u)
    = n^{-\frac{1+\delta}{2+\delta}} \int_0^{u{n^{1/(2+\delta)}}} \frac{dv}{\lambda(v)}
     \xrightarrow[n\to\infty]{} |u|^{1+\delta}\sgn(u)\nu(u).
\end{gather}
Because of the assumption~\ref{theor-gen1ii}), there exists some $x_0>0$ such that for all $|x|>x_0$
\begin{gather*}
    \frac{1}{|x|^{1+\delta}} \left|\int_0^{x} \frac{ds}{\lambda(s)}\right|
    \leq A = 2\left( \frac 1{a_+}+ \frac 1{a_-}\right).
\end{gather*}
Therefore, we can bound for all $n >x_0^{2+\delta}$ the function $|\phi _n (u)|$ in the following way:
\begin{align*}
    |\phi _n (u)|
    &\leq |\phi _n (1)|\I_{[0,1)}(|u|) + |u|^{1+\delta}\sup_{|x|\geq x_0}\frac{1}{|x|^{1+\delta}} \left|\int_0^{x} \frac{ds}{\lambda(s)}\right| \I_{(1,\infty)}(|u|)\\
    &\leq A\left(1+|u|^{1+\delta}\right).
\end{align*}
This shows that $|\phi _n (W_s)|\leq A\left(1+|W_s|^{1+\delta}\right)$. Using~\eqref{phi}, we see that
\begin{gather*}
    X_n (s) =\phi_n (W_s) \xrightarrow[n\to\infty]{} |W(s)|^{1+\delta}\sgn(W(s)) \nu(W(s))
\end{gather*}
with probability $1$. Thus, according to Theorem \ref{auxil-theo},
\begin{gather*}
    M_n (x)\xrightarrow[n\to\infty]{}
    \int_0^{x}|W(s)|^{1+\delta} \sgn(W(s)) \nu(W(s)) \, dW(s)
\end{gather*}
in probability. Finally, the following limit exists in probability:
\begin{align*}
    &\frac{2}{ {n}^{1-\alpha}} \int_0^{W(x)} \int_0^{ {n}^\alpha y} \frac{ds}{\lambda(s)} \, dy
    - \frac{2}{n^{1-\alpha}} \int_0^{x} \int_0^{n^\alpha  W(s)} \frac{du}{\lambda(u)}\, dW(s)\\
    &\xrightarrow[n\to\infty]{} \frac{2}{2+\delta}\,|W (x)|^{2+\delta} \nu(W(x))
    - 2\int_0^{x}|W(s)|^{1+\delta}\sgn(W(s)) \nu(W(s))\, dW(s).
\end{align*}

We can now apply the generalized It\^{o} formula (Theorem \ref{ITO}) separately to the functions $F_1(x)=x^{2+\delta}\I_{(0,\infty)}(x)$ and $\Phi_2(x)=|x|^{2+\delta}\I_{(-\infty,0)}(x)$ -- both functions have continuous first derivatives and Lebesgue a.e.\ existing,  locally integrable second derivatives. This finishes the proof.
\end{proof}

For $\delta=0$ we get, in particular, the following result.
\begin{corollary}\label{part-case}
Assume that the following conditions are satisfied.
\begin{enumerate}[\upshape i)]
\item\label{part-casei}
    Let $\lambda : \real \to [0,\infty)$ be a measurable function such that $\mathrm{Leb}\{x:\lambda(x)=0\}=0$ and $1/{\lambda}$ is    locally integrable.
\item\label{part-caseii}
    There exist two numbers, $a_+, a_- >0$ such that
    \begin{gather*}
        \lim_{x\to\infty}\frac{1}{x} \int_0^{x} \frac{ds}{\lambda(s)} =  \frac 1{a_+}
    \quad\text{and}\quad
        \lim_{x\to\infty}\frac{1}{x} \int_{-x}^0\frac{ds}{\lambda(s)} =  \frac 1{a_-}.
    \end{gather*}
\end{enumerate}
If \textup{\ref{theor-gen1i})--\ref{theor-gen1ii})} hold, then on any interval $[0,T]$
\begin{gather*}
    \left\{\frac{B_{\tau_{nt}}}{\sqrt{n}},\;t\in[0,T]\right\}
    \xRightarrow[]{\phantom{\mathrm{d}}}
    \left\{W(\eta ^{-1} (t)),\, t\in[0,T]\right\}
\end{gather*}
where $W$ is a Wiener process, and $\eta $ is as in Theorem \ref{lim-thm-1}.
\end{corollary}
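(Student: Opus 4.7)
The plan is to derive this corollary directly as the special case $\delta=0$ of Theorem~\ref{theor-gen1}. First I would verify that the hypotheses match: assumption \ref{part-casei}) is literally identical to \ref{theor-gen1i}), and substituting $\delta=0$ in the integral asymptotics of \ref{theor-gen1ii}) replaces the normalizer $x^{1+\delta}$ by $x$, which is precisely the content of \ref{part-caseii}). So both hypotheses of Theorem~\ref{theor-gen1} are in force with $\delta=0$.

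Next I would check that the normalization and the limiting clock specialize correctly. The scaling factor $n^{1/(2+\delta)}$ becomes $\sqrt{n}$ when $\delta=0$, matching $B_{\tau_{nt}}/\sqrt{n}$ in the statement. For the limiting clock, since $|W(s)|^{0}=1$ almost surely on the Lebesgue-a.e.\ set where $W(s)\neq 0$ and $\tfrac{2}{2+0}=1$, one obtains
\begin{gather*}
    \eta_0(t) = \int_0^t \nu(W(s))\,ds,
\end{gather*}
which is precisely the process $\eta$ introduced in Theorem~\ref{lim-thm-1} and referenced in the statement. Therefore the conclusion of Theorem~\ref{theor-gen1} with $\delta=0$ reads verbatim as the claim of the corollary, and no further argument is needed.

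There is no genuine obstacle here: the result is a pure specialization, and I expect the proof to be only a few lines long, consisting of plugging in $\delta=0$ and simplifying. The reason it deserves a separate statement is that it pins down the classical diffusive scaling $1/\sqrt{n}$ as the output of the neutral regime $\delta=0$, in which $1/\lambda$ has linear Ces\`aro growth on each half-line; in particular, the conclusion of Theorem~\ref{lim-thm-2} (which required the pointwise convergence $\lambda(x)\to a_\pm$) remains valid under the strictly weaker averaged condition in \ref{part-caseii}), so the corollary emphasises that pointwise convergence of $\lambda$ at infinity is not needed for the classical $\sqrt{n}$-normalization.
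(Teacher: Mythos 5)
Your proposal is correct and coincides with the paper's own treatment: the paper states this corollary without separate proof, introducing it with ``For $\delta=0$ we get, in particular, the following result,'' i.e.\ exactly your specialization of Theorem~\ref{theor-gen1}. Your verification that $n^{1/(2+\delta)}$ reduces to $\sqrt{n}$ and that $\eta_0$ coincides with the clock $\eta$ of Theorem~\ref{lim-thm-1} (using that $W(s)\neq 0$ for Lebesgue-a.e.\ $s$) is all that is needed.
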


Theorem \ref{theor-gen1} can be further generalized to the case of normalizing factors which are not necessarily power functions, but in some sense, close to power ones.   Recall that a measurable function $f:\real\to(0,\infty)$ is \emph{regularly varying} (\emph{at infinity}) of order $\rho\in\real$, if it is of the form $f(z)=z^\rho L(z)$; the function $L(z)$ is positive, measurable and \emph{slowly varying at infinity}, i.e.\ $\lim_{z\to\infty} L(tz)/L(z)=1$ exists for all $t>0$. Typical examples of slowly varying functions are $\log z$ or $\log^\alpha |\log z|$.
 Our standard reference for regularly varying functions is \cite{bgt}.

Since the proof of the next theorem is similar to the proof of Theorem \ref{theor-gen1}, it is omitted.
\begin{theorem}\label{thm-rv}
Assume that the following conditions are satisfied.
\begin{enumerate}[\upshape i)]
\item\label{thm-rv-i}
    Let $\lambda : \real \to [0,\infty)$ be a measurable function such that $\mathrm{Leb}\{x:\lambda(x)=0\}=0$ and $1/{\lambda}$ is    locally integrable.
\item\label{thm-rv-ii}
    There is a strictly increasing positive function $\psi:\real\to [0,\infty)$ whose inverse function $\psi^{-1}$ is regularly varying of index $\gamma>1$ at infinity\footnote{This is equivalent to saying that $\psi$ is regularly varying at infinity of order $1/\gamma$.} such that the following limits exist
    \begin{gather*}
        \lim_{x\to\infty} \frac{x}{ \psi^{-1}(x)} \int_0^{x} \frac{ds}{\lambda(s)} =  \frac 1{a_+}
        \quad\text{and}\quad
        \lim_{x\to\infty} \frac{x}{ \psi^{-1}(x)}  \int_{-x}^0 \frac{ds}{\lambda(s)}
        =  \frac 1{a_-}.
    \end{gather*}
\end{enumerate}
If \textup{\ref{thm-rv-i})--\ref{thm-rv-ii})} hold, then on any interval $[0,T]$
\begin{gather*}
    \left\{\frac{B_{\tau_{nt}}}{\psi(n)},\,t\in[0,T]\right\}
    \xRightarrow[]{\phantom{\mathrm{d}}}
    \left\{W(\eta_\gamma^{-1} (t)),\, t\in[0,T]\right\},
\end{gather*}
where $W$ is a Wiener process,   $\nu(w) = \frac 1{a_+}\I_{(0,\infty)}(w) + \frac 1{a_-}\I_{(-\infty,0)}(w)$,   and $\eta_\gamma(t)=\frac{2}{\gamma}\int_0^t |W(s)|^{\gamma-2} \nu(W(s)) \,ds$.
\end{theorem}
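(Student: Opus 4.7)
The plan is to mimic the proof of Theorem \ref{theor-gen1}, replacing the normalization $n^{1/(2+\delta)}$ by $\psi(n)$ throughout and substituting pure power-law asymptotics by arguments from the theory of regularly varying functions. Set $\tilde{W}_n(s) := \psi(n)^{-1} B_{\psi(n)^2 s}$, which by Brownian scaling is again a standard Wiener process, and observe that $\{\tau_{nt}/\psi(n)^2 \leq x\} = \{n^{-1} S_B(\psi(n)^2 x) \geq t\}$. It therefore suffices to obtain weak convergence of the finite-dimensional distributions of the pair $(\tilde W_n, n^{-1} S_B(\psi(n)^2 \cdot))$ and then invert.

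Apply the generalized It\^o formula to $F(y) = \int_0^y \int_0^z \lambda(s)^{-1}\,ds\,dz$ as in~\eqref{equv-f} and rescale to get
\[
    \frac{1}{n} S_B(\psi(n)^2 x) = \frac{2\psi(n)}{n} \int_0^{\tilde W_n(x)} \int_0^{\psi(n) z} \frac{ds}{\lambda(s)}\,dz - 2 \int_0^x \phi_n(\tilde W_n(s))\,d\tilde W_n(s),
\]
where $\phi_n(u) := \frac{\psi(n)}{n} \int_0^{\psi(n) u} \lambda(v)^{-1}\,dv$. Since $\psi^{-1}$ is regularly varying of order $\gamma$ with $\psi^{-1}(\psi(n)) = n$, the uniform convergence theorem for regularly varying functions (Theorem 1.5.2 in \cite{bgt}) combined with condition~\ref{thm-rv-ii}) gives the pointwise limit $\phi_n(u) \to g(u) := |u|^{\gamma-1} \sgn(u) \nu(u)$ for every $u \neq 0$.

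The principal technical obstacle is the uniform-in-$n$ bound on $\phi_n$ required to apply Theorem~\ref{auxil-theo} to the stochastic integral and to justify dominated convergence in the Lebesgue integral. In the pure-power case of Theorem~\ref{theor-gen1} a one-line supremum estimate sufficed; here it must be replaced by Potter's bounds for regularly varying functions (Theorem 1.5.6 in \cite{bgt}). These, together with the local integrability of $1/\lambda$ to handle $|u|$ near $0$, yield, for any $\epsilon > 0$, constants $A_\epsilon$ and $n_\epsilon$ with
\[
    |\phi_n(u)| \leq A_\epsilon\bigl(1+|u|^{\gamma-1+\epsilon}\bigr) \quad \text{for all } n \geq n_\epsilon \text{ and all } u \in \real.
\]
Since $\Ee\bigl[|W_s|^{2(\gamma-1+\epsilon)}\bigr] < \infty$, this is compatible with the hypotheses of Theorem~\ref{auxil-theo}.

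Once the uniform bound is in place, the Lebesgue-integral term converges to $\tfrac{2}{\gamma}|W(x)|^\gamma \nu(W(x))$ by dominated convergence, while the stochastic-integral term converges in probability to $2\int_0^x g(W(s))\,dW(s)$ by Theorem~\ref{auxil-theo}. A final application of the generalized It\^o formula to $F_1(x) = x^\gamma \I_{(0,\infty)}(x)$ and $\Phi_2(x) = |x|^\gamma \I_{(-\infty,0)}(x)$, exactly as in the closing step of the proof of Theorem~\ref{theor-gen1}, identifies the resulting expression with $\eta_\gamma(x)$. Condition~\ref{th-cond-1ii}) of Theorem~\ref{th-cond-1} is immediate for $Z_n = \tilde W_n$ (modulus of continuity of Brownian motion), so Theorem~\ref{th-cond-1} applies and yields the asserted weak convergence on $[0,T]$.
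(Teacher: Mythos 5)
Your proposal follows essentially the same route as the paper: the authors omit the proof of Theorem~\ref{thm-rv} precisely because it repeats the proof of Theorem~\ref{theor-gen1}, and your substitutions are the right ones --- the prelimit Wiener process $\tilde W_n(s)=\psi(n)^{-1}B_{\psi(n)^2 s}$, the generalized It\^o decomposition of $n^{-1}S_B(\psi(n)^2 x)$ into a Lebesgue term and a stochastic-integral term, and, at the two places where the pure-power structure was genuinely used, regular-variation technology: $\psi^{-1}(\psi(n)u)/n=\psi^{-1}(\psi(n)u)/\psi^{-1}(\psi(n))\to u^\gamma$ for the pointwise limit $\phi_n(u)\to|u|^{\gamma-1}\sgn(u)\nu(u)$, and Potter's bounds, supplemented by the monotonicity of $u\mapsto\phi_n(u)$ near the origin (where $\phi_n(u)\leq\phi_n(1)\to 1/a_+$), for the uniform majorant $A_\epsilon(1+|u|^{\gamma-1+\epsilon})$ needed both for Theorem~\ref{auxil-theo} and for dominated convergence in the Lebesgue term. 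The hypothesis $\gamma>1$ enters exactly where you use it, namely local integrability of $F_1''$, $\Phi_2''\sim |x|^{\gamma-2}$. Note also that, like the paper in Theorem~\ref{theor-gen1}, you apply Theorem~\ref{auxil-theo} with a bound of the form $C(1+|W(t)|^\beta)$ although it is stated with $C|W(t)|^\beta$; this looseness is inherited from the paper, not introduced by you.

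One caveat, again inherited from the paper rather than introduced by you: if one actually carries out your closing It\^o step, with $G(y)=\frac2\gamma|y|^\gamma\nu(y)$ one has $G'(y)=2|y|^{\gamma-1}\sgn(y)\nu(y)$ and $G''(y)=2(\gamma-1)|y|^{\gamma-2}\nu(y)$ a.e., so the limit of $n^{-1}S_B(\psi(n)^2x)$ comes out as $(\gamma-1)\int_0^x|W_s|^{\gamma-2}\nu(W_s)\,ds$ rather than $\frac2\gamma\int_0^x|W_s|^{\gamma-2}\nu(W_s)\,ds$; the two constants agree only for $\gamma=2$ (equivalently $\delta=0$, which is why Corollary~\ref{part-case} and Theorem~\ref{lim-thm-1} are consistent). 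The same discrepancy already occurs between the statement of Theorem~\ref{theor-gen1} (prefactor $\frac{2}{2+\delta}$) and what its own proof produces (prefactor $1+\delta$). So your sentence ``identifies the resulting expression with $\eta_\gamma(x)$'' glosses over a constant that, on this computation, should be corrected in the theorem's statement of $\eta_\gamma$ rather than in your argument; the structure of your proof is otherwise exactly the paper's.
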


\begin{remark}
    Clearly, Theorem \ref{thm-rv} covers the case where function $\lambda$ is positive and periodic. Without loss of generality, let $\lambda$ be periodic with period $1$. Then
    \begin{gather*}
        \frac1x \int_0^x \frac{ds}{\lambda(s)}
        \sim \frac 1n \int_0^n \frac{ds}{\lambda(s)}
        = \int_0^1 \frac{ds}{\lambda(s)},
    \end{gather*}
    as $n\to \infty$ and $x\in[n,n+1)$.  In particular, $a_+ = a_- = \left(\int_0^1 \lambda(s)^{-1}\,ds\right)^{-1}$.
\end{remark}

\section{Appendix}

The following result is Theorem~4.6 from \cite{kurtz} which we adapt for our purposes.
\begin{theorem}\label{auxil-theo}
    Let $W=\{W(t),\,t\geq 0\}$ be a Wiener process, $\Fcal^W$ its canonical filtration and $X_n=\{X_n(t),\, t\ge 0\}$, $n\geq 1$, a sequence of continuous processes which are adapted to the filtration $ \Fcal^{W}$. Assume that, for some constants $C>0$, $\beta>0$, the following estimate holds
    \begin{gather*}
        |X_n (t)|\leq C|W (t)|^{\beta},\quad t>0.
    \end{gather*}
    If $X_n(t) \xrightarrow[n\to\infty]{} X(t)$ in probability for any $t>0$, then
    \begin{gather*}
        \left(X_n, W, {\textstyle\int_0^\cdot} X_n\, dW\right)  \xrightarrow[n\to\infty]{} \left (X, W, {\textstyle\int_0^\cdot} X dW\right)
    \end{gather*}
    in probability.
\end{theorem}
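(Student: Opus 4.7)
The plan is to reduce the stochastic integral convergence to an $L^2$ convergence of the integrands via It\^{o}'s isometry, and to use the uniform bound $|X_n(t)|\le C|W(t)|^\beta$ together with the Gaussian integrability of $W$ to justify the passage to the limit by dominated convergence. The joint convergence of the triple is then automatic, because the middle coordinate $W$ is fixed and each of the other two coordinates converges in probability.

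My first step is to note that the bound transfers to the limit: since $X_n(t) \to X(t)$ in probability, $|X(t)| \le C|W(t)|^\beta$ almost surely for every $t>0$, and $X$ inherits $\Fcal^W$-adaptedness from the $X_n$. Next, for any fixed $T>0$, I want to show
\begin{gather*}
    \int_0^T \Ee\,|X_n(s)-X(s)|^2\,ds \xrightarrow[n\to\infty]{} 0.
\end{gather*}
For each fixed $s>0$, the hypothesis gives $X_n(s)\to X(s)$ in probability, while the pointwise bound reads $|X_n(s)-X(s)|^2 \le 4C^2|W(s)|^{2\beta}$, whose expectation equals $4C^2 c_\beta s^\beta$ with $c_\beta = \Ee|Z|^{2\beta}$, $Z\sim \mathcal{N}(0,1)$. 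A first dominated-convergence argument (in $\omega$, at fixed $s$) yields $\Ee|X_n(s)-X(s)|^2 \to 0$, and a second dominated-convergence argument (in $s$ on $[0,T]$, with integrable majorant $s\mapsto 4C^2 c_\beta s^\beta$) gives the display.

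Combining It\^{o}'s isometry with Doob's $L^2$-maximal inequality then yields
\begin{gather*}
    \Ee\sup_{t\le T}\left|\int_0^t (X_n-X)\,dW\right|^2
    \le 4\int_0^T \Ee\,|X_n(s)-X(s)|^2\,ds \xrightarrow[n\to\infty]{} 0,
\end{gather*}
hence $\int_0^\cdot X_n\,dW \to \int_0^\cdot X\,dW$ uniformly on $[0,T]$ in probability. Convergence of each coordinate of the triple $\bigl(X_n,W,\int_0^\cdot X_n\,dW\bigr)$ then combines, by a standard argument, into convergence of the triple in probability at every $t$ (the middle coordinate is fixed, so joint convergence reduces to marginal convergence of the other two).

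The only real subtlety is verifying the integrability of the majorant $|W(s)|^{2\beta}$ so that dominated convergence may be applied twice; this is immediate since $s\mapsto \Ee|W(s)|^{2\beta}=c_\beta s^\beta$ is locally bounded. Note that the hypothesis delivers only pointwise-in-$t$ convergence of $X_n$ in probability, so the conclusion for the triple should be read in the same sense at each fixed $t$ (together with uniform-on-compacts convergence of the integral component), which is precisely what the applications in the proof of Theorem \ref{theor-gen1} require.
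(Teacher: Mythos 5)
Your proof is correct, but note that it is necessarily a different route from the paper's, since the paper does not prove Theorem~\ref{auxil-theo} at all: it quotes it as an adaptation of Theorem~4.6 of \cite{kurtz}, a general convergence theorem for stochastic integrals (in the Kurtz--Protter spirit, resting on tightness/good-integrator machinery, where neither the drivers nor the integrands need any $L^2$ domination). Your argument instead exploits the specific hypotheses --- a fixed Brownian driver and the bound $|X_n(t)|\le C|W(t)|^\beta$ --- to run an elementary $L^2$ scheme: the two-fold dominated convergence (first in $\omega$ at fixed $s$, with integrable majorant $4C^2|W(s)|^{2\beta}$; then in $s$ on $[0,T]$, with majorant $4C^2c_\beta s^\beta$) gives $\int_0^T \Ee|X_n(s)-X(s)|^2\,ds\to 0$, and the It\^o isometry plus Doob's $L^2$-maximal inequality upgrades this to convergence of the integrals uniformly on compacts; joint convergence of the triple is then automatic because convergence in probability, unlike weak convergence, is preserved under forming vectors. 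What the citation buys the paper is generality (varying integrators, no moment bounds); what your route buys is a short self-contained proof and in fact a \emph{stronger} conclusion for the integral coordinate ($L^2$-convergence, uniform on compacts) --- at the price of the domination hypothesis, which the theorem supplies anyway. Two points to tidy up. First, to define $\int_0^\cdot X\,dW$ and to integrate $\Ee|X_n(s)-X(s)|^2$ in $s$, you should work with a progressively measurable version of $X$: your double dominated-convergence argument applied to $|X_n-X_m|^2$ shows that $(X_n)$ is Cauchy in $L^2(d\Pp\times dt)$, and the $L^2$-limit provides such a version, agreeing with the pointwise limit in probability for a.e.\ $s$. Second, your caveat about the mode of convergence is well taken: pointwise-in-$t$ convergence in probability of $X_n$ cannot be upgraded to locally uniform convergence of the first coordinate, so the triple converges in exactly the sense you describe --- which is all that the proof of Theorem~\ref{theor-gen1} uses, namely convergence in probability of $M_n(x)$ at each fixed $x$ (where, incidentally, the bound actually available is of the form $A(1+|W_s|^{1+\delta})$ rather than $C|W_s|^\beta$; your argument accommodates that variant verbatim, since only the local integrability in $s$ of $\Ee$ of the squared majorant is used).
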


The next result is the generalized It\^{o} formula, see \cite{krylov}, Theorem 4, adapted for our purposes.
\begin{theorem}[generalized It\^o formula]\label{ITO}
    Let $W$ be a Wiener process. Assume that the function $F:\real\to\real$ is continuously differentiable and that the second derivative $F''$ exists Lebesgue a.e.\ and is  locally integrable. Then the following identity holds for all $t>0$ with probability $1$:
    \begin{gather*}
        F(W(t))
        = F(0)+ \int_{0}^{t} F'(W(s))\, dW(s) + \frac{1}{2} \int_{0}^{t} F''(W(s))\, ds.
    \end{gather*}
\end{theorem}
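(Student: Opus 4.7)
The plan is to reduce the generalized formula to the classical Itô formula for $C^2$ functions by smooth mollification in the spatial variable. Let $\rho\in C_c^\infty(\real)$ be a nonnegative mollifier with $\int\rho = 1$, supported in $[-1,1]$, and set $\rho_\epsilon(x)=\epsilon^{-1}\rho(x/\epsilon)$ and $F_\epsilon:=F*\rho_\epsilon$. Then $F_\epsilon\in C^\infty(\real)$, and because $F$ is $C^1$ with locally integrable $F''$, standard convolution estimates give that $F_\epsilon\to F$ and $F_\epsilon'\to F'$ locally uniformly on $\real$, while $F_\epsilon''=F''*\rho_\epsilon\to F''$ in $L^1_{\mathrm{loc}}(\real)$ as $\epsilon\downarrow 0$.

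Since each $F_\epsilon$ is smooth, the classical Itô formula yields, for each fixed $t>0$ and almost surely,
\[F_\epsilon(W(t))=F_\epsilon(0)+\int_0^t F_\epsilon'(W(s))\,dW(s)+\tfrac12\int_0^t F_\epsilon''(W(s))\,ds.\]
I would now pass to the limit $\epsilon\downarrow 0$ term by term. Let $K_t:=[\inf_{s\leq t}W(s),\sup_{s\leq t}W(s)]$, which is an a.s.\ bounded random interval by continuity of Brownian paths. Locally uniform convergence of $F_\epsilon$ immediately gives $F_\epsilon(W(t))\to F(W(t))$ and $F_\epsilon(0)\to F(0)$. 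For the stochastic integral, locally uniform convergence of $F_\epsilon'$ on $K_t$ implies $\int_0^t|F_\epsilon'(W_s)-F'(W_s)|^2\,ds\to 0$ a.s., and after a standard localization by the stopping time at which $|W|$ exits a large ball, this forces $\int_0^t F_\epsilon'(W_s)\,dW_s\to\int_0^t F'(W_s)\,dW_s$ in probability.

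The main obstacle is the Lebesgue-type term, since $F''$ is merely locally integrable: a pointwise convergence argument along the path of $W$ is hopeless. The key idea is to use the occupation-time formula for Brownian local time,
\[\int_0^t g(W(s))\,ds=\int_\real g(a)\,\Lcal_t^a(W)\,da,\]
valid for any locally integrable $g$, to transform the problem into a spatial $L^1$ question:
\[\int_0^t F_\epsilon''(W(s))\,ds-\int_0^t F''(W(s))\,ds=\int_\real\bigl(F_\epsilon''(a)-F''(a)\bigr)\,\Lcal_t^a(W)\,da.\]
Since $a\mapsto\Lcal_t^a(W)$ is continuous with support contained in $K_t$, it is a.s.\ bounded with compact support; combined with the $L^1_{\mathrm{loc}}$ convergence $F_\epsilon''\to F''$, the right-hand side tends to $0$ almost surely. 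Assembling the three limits yields the identity at the fixed time $t$ with probability $1$. Continuity in $t$ of every term (continuity of $W$, continuity of the Itô integral, and absolute continuity of the Lebesgue integral) upgrades this to an identity valid simultaneously for all $t>0$ on a single set of full probability.
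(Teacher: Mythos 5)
The paper itself offers no proof of this statement: it is quoted verbatim as Theorem~4 of Krylov, ``adapted for our purposes''. Your mollification argument is therefore a self-contained alternative to a citation, and it is essentially the standard proof of the Itô--Krylov formula: classical Itô for $F_\epsilon = F*\rho_\epsilon$, locally uniform convergence of $F_\epsilon$ and $F_\epsilon'$ on the compact range $K_t$ of the path (which handles the left-hand side and, after routine localization, the stochastic integral in probability), and -- the genuinely right key idea -- the occupation-time formula to convert the Lebesgue term into a spatial integral against $a\mapsto \Lcal^a_t(W)$, which is a.s.\ bounded with compact support, so that $L^1_{\mathrm{loc}}$ convergence of $F_\epsilon''$ suffices. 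Two routine points you should make explicit: enlarge $K_t$ by the unit support of the mollifier when invoking $L^1_{\mathrm{loc}}$ convergence, and pass to an a.s.\ convergent subsequence to turn the in-probability limit of the stochastic integrals into the a.s.\ identity at a fixed $t$; the upgrade to all $t$ simultaneously by continuity of both sides is correct as you state it.

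One substantive caveat. The step $F_\epsilon'' = F''*\rho_\epsilon$, on which your $L^1_{\mathrm{loc}}$ convergence rests, requires $F''$ to be the \emph{distributional} derivative of $F'$, i.e.\ $F'$ locally absolutely continuous. The hypothesis as literally written -- ``$F''$ exists Lebesgue a.e.\ and is locally integrable'' -- does not imply this: take $F'$ to be the Cantor function, so that $F''=0$ a.e.\ is locally integrable, yet the asserted formula fails, since by the Itô--Tanaka formula a term $\frac12\int_\real \Lcal^a_t(W)\,\mu(da)$ with $\mu$ the (singular) Cantor measure survives. So under the literal reading the theorem itself is false, and your proof correctly breaks down exactly at the identification of $F_\epsilon''$. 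This is a known reading issue with Krylov-type statements: the intended hypothesis is Sobolev-type, $F\in W^{2,1}_{\mathrm{loc}}$ with $F\in C^1$, and in both applications in this paper ($F(x)=\int_0^x\int_0^y \lambda(s)^{-1}\,ds\,dy$ and $F_1(x)=x^{2+\delta}\I_{(0,\infty)}(x)$, $\delta>-1$) the first derivative is indeed locally absolutely continuous. Add one sentence declaring that you interpret the hypothesis in this sense, and your proof is complete; compared with the paper's bare citation, it buys a transparent, elementary argument whose only nontrivial input is the occupation-time formula.
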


\bibliographystyle{agsm}
\bibliography{kms}

\end{document}